\documentclass[12pt,reqno]{amsart}
\usepackage[utf8]{inputenc}
\usepackage{graphicx}
\graphicspath{ {./images/} }
\addtolength{\textwidth}{4cm} \addtolength{\hoffset}{-2cm}
\addtolength{\marginparwidth}{-1cm} \addtolength{\textheight}{2cm}
\addtolength{\voffset}{-1cm}

\usepackage{amsfonts,latexsym,amsthm,amssymb,amsmath,amscd,euscript,mathrsfs}

% Setup TikZ
\usepackage{tikz}
\usepackage{tikz-3dplot}
\usetikzlibrary{arrows}
\tikzstyle{block}=[draw opacity=0.7,line width=1.4cm]

\definecolor{umichblue}{RGB}{0,39,76}
\definecolor{umichmaize}{RGB}{255,203,5}

\newtheorem{theorem}{Theorem}[section]

\newtheorem{lemma}{Lemma}[section]
\newtheorem{corollary}{Corollary}[section]

% Remarks and definitions
\theoremstyle{definition}

\newtheorem*{remark}{Remark}

\theoremstyle{remark}

\newcommand{\Z}{\mathbb{Z}}
\numberwithin{equation}{section}

% Math blackboard font

% Stylistic choices
\renewcommand{\epsilon}{\varepsilon}

\title{Constructions of Generalized MSTD Sets in Higher Dimensions}
\author{Elena Kim}\email{elena.kim@pomona.edu}
\address{Department of Mathematics, Pomona College
Claremont, CA 91711}

\author{Steven J. Miller}\email{sjm1@williams.edu, Steven.Miller.MC.96@aya.yale.edu}
\address{Department of Mathematics and Statistics, Williams College,
Williamstown, MA 01267}

\date{\today}

\thanks{We thank John Haviland for his help in the creation of  Theorem \ref{linear transformation} and his contribution of Figures \ref{tikzfringes} and  \ref{tikzmiddle}. Additionally, we thank John Lentfer and Fernando Trejos Suárez for their detailed feedback on drafts of this paper. Finally, we thank members of the SMALL 2020 REU at Williams College, especially Ph\'uc L\^am and those formerly mentioned, for enlightening conversations. This research was supported by NSF grants DMS1947438 and DMS1561945 and by Williams College.}

\begin{document}

\begin{abstract}

     Let $A$ be a set of finite integers, define $$A+A \ = \ \{a_1+a_2: a_1,a_2 \in
A\}, \ \ \ A-A \ = \ \{a_1-a_2: a_1,a_2 \in A\},$$ and for non-negative integers $s$ and $d$ define $$sA-dA\ =\ \underbrace{A+\cdots+A}_{s} -\underbrace{A-\cdots-A}_{d}.$$ A More Sums than Differences (MSTD) set is an $A$ where $|A+A| > |A-A|$. It was initially thought that the percentage of subsets of $[0,n]$ that are MSTD would go to zero  as $n$ approaches infinity as addition is commutative and subtraction is not. However, in a surprising 2006 result, Martin and O'Bryant proved that a positive percentage of sets are MSTD, although this percentage is extremely small, about $10^{-4}$ percent. This result was extended by Iyer, Lazarev, Miller, ans Zhang \cite{ILMZ} who showed that a positive percentage of sets are generalized MSTD sets, sets for $\{s_1,d_1\} \neq \{s_2, d_2\}$ and $s_1+d_1=s_2+d_2$ with $|s_1A-d_1A| > |s_2A-d_2A|$, and that in $d$-dimensions, a positive percentage of sets are MSTD. 

For many such results, establishing explicit MSTD sets in $1$-dimensions relies on the specific choice of the elements on the left and right fringes of the set to force certain differences to be missed while desired sums are attained. In higher dimensions, the geometry forces a more careful assessment of what elements have the same behavior as $1$-dimensional fringe elements. We study fringes in $d$-dimensions and use these to create new explicit constructions. We prove the existence of generalized MSTD sets in $d$-dimensions and the existence of $k$-generational sets, which are sets where $|cA+cA|>|cA-cA|$ for all $1\leq c \leq k$. We then prove that under certain conditions, there are no sets with $|kA+kA|>|kA-kA|$  for all $k \in \mathbb{N}.$
    %This first explicit construction of MSTD sets in $d$-dimension demonstrates the generalizability of certain $1$-dimensional properties. 
\end{abstract}

\maketitle
\setcounter{equation}{0}

\tableofcontents

\section{Introduction}
Given a finite set $A\subset\Z$, we define  the sumset $A+A$ and the difference set $A-A$ by
\begin{align} A+A &\ =\ \{a_1 + a_2 : a_1, a_2 \in A\}, \nonumber \\
A-A &\ =\ \{a_1 - a_2 : a_1, a_2 \in A\}.
\end{align}
It is natural to compare the sizes of $A+A$ and $A-A$ as we vary $A$ over a family of sets.  As addition is commutative while subtraction is not, a pair of distinct elements $a_1, a_2 \in A$ generates two differences $a_1-a_2$ and $a_2-a_1$ but only one sum $a_1+a_2$.  We thus expect that most of the time, the size of the difference set is greater than that of the sumset---that is, we expect most sets $A$ to be \emph{difference-dominant}.  A set whose sumset has the same number of elements as its difference set is called \emph{balanced}.  It is possible, however, to construct sets whose sumsets have more elements than their difference sets, sets known as \emph{sum-dominant} or \emph{More Sums Than Differences} (MSTD) sets.  The first example of an MSTD set was discovered by Conway in the 1960s: $\{0,2,3,4,7,11,12,14\}$. While there are numerous constructions of such sets and infinite families of such sets, one expects sum-dominant sets to be rare; however, in 2006, Martin and O'Bryant \cite{MO} proved that a positive percentage of sets are sum-dominant. They showed the percentage is at least $2 \cdot 10^{-7}$, which was improved by Zhao \cite{Zh2} to at least $4.28 \cdot 10^{-4}$ (Monte Carlo simulations suggest the true answer is about $4.5 \cdot 10^{-4}$). For a set $A \subset [0,n]$, one can note that each nonzero $k\in  A+A$ has about $n/4-|n-k|/4$ representations as a sum of two elements in $A$. This number of representations is large except when $k$ is close to $0$ or $2n$. In these cases, $k$ can only be made from a select few pairs of elements close to $0$ or $n$. Thus a clever choice of the \textquotedblleft fringe," elements near the ends of the set, determines the size of its sumset and difference set, not the choice of the middle of the set.\\

We can extend the idea of sumsets and difference sets by defining  
\begin{equation}sA-dA\ = \ \underbrace{A+\cdots+A}_{s} -\underbrace{A-\cdots-A}_{d}.
\end{equation}
Given $s_1, d_1, s_2, d_2$ with $s_1+d_1 = s_2+d_2 \geq 2$, a generalized MSTD set is a set of integers $A$ such that $|s_1A-d_1A| > |s_2A-d_2A|$. Let $\{x_{j}\}_{j=1}^k$,  $\{y_{j}\}_{j=1}^k$, $\{w_{j}\}_{j=1}^k$, and $\{z_{j}\}_{j=1}^k$ be sequences of non-negative integers, such that $x_{j}+y_{j}=w_{j}+z_{j}=j$ and $\{x_j,y_j\} \neq \{w_j,z_j\}$ for every $2\leq j\leq k$. We then define a chain of generalized MSTD sets as an $A$ such that $\left|x_{j}A-y_{j}A\right|>\left|w_{j}A-z_{j}A\right|$ for every $2\leq j\leq k$. A $k$-generational set is a set $A$ such that for a specific positive integer $k$ we have $A, A+A, \ldots, kA$ are all sum-dominant. Iyer, Lazarev, Miller, and Zhang \cite{ILMZ} proved, through explicit constructions, that  generalized MSTD sets, chains of generalized MSTD sets, and $k$-generational sets exist. The authors additionally showed that these sets represent a positive percentage of subsets of $[0,n]$ as $n \rightarrow \infty$. 

A natural question then arises: are such sets similarly common in higher-dimensional spaces? This paper seeks to answer this question as a sequel to \cite{ILMZ}. The previous work on higher-dimensional MSTD sets has been conducted by Do, Kulkarni, Miller, Moon, Wellens, and Wilcox \cite{DKMMWW} who looked at sets created by dilating $d$-dimensional polytopes with vertices in $\Z^d$. The authors then used a probabilistic argument that was similar to that of Martin and O'Bryant, not explicit constructions, to prove a positive percentage of MSTD sets in $d$-dimensions. In this paper, we extend ideas from \cite{ILMZ} and \cite{DKMMWW} to look at generalized  MSTD sets, chains of generalized MSTD sets, and $k$-generational sets in higher dimensions through explicit constructions. Interesting new features and complications arise in higher dimensions. Whereas on the line it is natural to consider subsets of the integers in a growing interval, in $d$-dimensions we construct our sets as subsets of various parallelograms. Additionally, higher dimensions requires a clever generalization of the fringes.

In Theorem \ref{generalized MSTD}, Theorem  \ref{chain of gen MSTD}, and Theorem  \ref{k generational}, we prove, respectively, the existence of $2$-dimensional generalized MSTD sets, chains of generalized MSTD sets, and $k$-generational sets with $2$-dimensional generalizations of the sets in \cite{ILMZ}.

In Lemma \ref{growth of kA}, we show that for a 2-dimensional set, $A$, with certain elements and for sufficiently large  $N$, the amount of missing sums in $kA$ for $k \geq N$ grows linearly.

% \begin{lem}[Size of $|kA|$]\label{size ka}
% Let $A = \{(a_1, b_1), (a_2, b_2), \ldots, (a_m, b_m)\}$. Let $a$ be the smallest non-zero $a_i$, $a'$ be the largest $a_i$, $b$ be the smallest non-zero $b_i$ and $b'$ be the largest $b_i$ and $N=\max\{2a'^2, 2b'^2\}$. If $a$ and $a'$ are coprime,  $b$ and $b'$ are coprime, and $$\{(0,0), (a,0), (0,b), (a',0), (0,b'),(a,b'), (a',b), (a',b') \} \subset A,$$ then for $k \geq N$  and for some constants $C, c_1, c_2$, we have $|kA|=ka'b' -C-c_1k -c_2k$. 
% \end{lem}

The proof of this lemma examines elements, $(x,y)$ in $kA$, by using a $1$-dimensional result by Nathanson \cite{Na1} on $x$ and $y$. We can then determine whether $(x,y)$ is or is not in $kA$. We can then use this lemma to prove Theorem \ref{no k generational} that these sets cannot be $k$-generational for every $k,$ or in other words cannot have $|kA+kA|>|kA-kA|$ for all $k \in \mathbb{N}.$

We then extend our $2$-dimensional work to $d$-dimensions by extending our explicit constructions for generalized MSTD sets into $d$-dimensions. As our work builds on that of \cite{ILMZ}, we end with an appendix which corrects some mistakes in that paper.

\section{Proof of Existence of $2$-Dimensional Generalized MSTD Sets}

We begin with a brief survey of the proofs for $1$-dimensional results from \cite{ILMZ} that we extend to  $2$-dimensions. To prove the existence of generalized MSTD sets and chains of generalized MSTD sets, the authors construct explicit sets with these properties. See Figure \ref{fig:1-dimensional addition} and Figure \ref{fig:1-dimensional subtraction}, which we use with permission from the authors, for depictions of these sets. Their sets have nearly symmetric left and right fringes, $L$ and $R$, with $R$ slightly longer than $L$. Specifically these fringes are defined as:
\begin{equation*}
\begin{split}
  L &\ = \  \{0,1,3,4,\ldots,k-1,k,k+1,2k+1\}\\
 &\ =  [0,2k+1]\backslash\left(\{2\}\cup[k+2,2k]\right) 
\end{split}
\end{equation*}

\begin{equation}
\begin{split}
R &\ = \ \{0,1,2,4,5,\ldots,k,k+1,k+2,2k+2\}\\
 &\ = \ [0,2k+2]\backslash\left(\{3\}\cup[k+3,2k+1]\right).
 \end{split}
 \end{equation}
 
The authors then create a filled-in middle interval $M$ for their set. Let $n>4(2k^2+1).$

\noindent When $s_1>s_2$, the middle is 
\begin{equation}M \ = \ [2k^2+1-d_1,n-(2k^2+1-d_1)],
\end{equation} and when $s_2>s_1,$ the middle is
\begin{equation}M \ =\ [2k^2+1-s_1,n-(2k^2+1-s_1)].\end{equation}

\noindent Then the complete set is \begin{equation}A \ = \ L \ \cup \ M  \ \cup \ (n-R).\end{equation}

As $A$  is added and subtracted, the middle section and the fringes begin to overlap. This overlap controls the amount of elements in $sA-dA.$

\begin{figure}[h!]
    \centering
    \includegraphics[scale=.9]{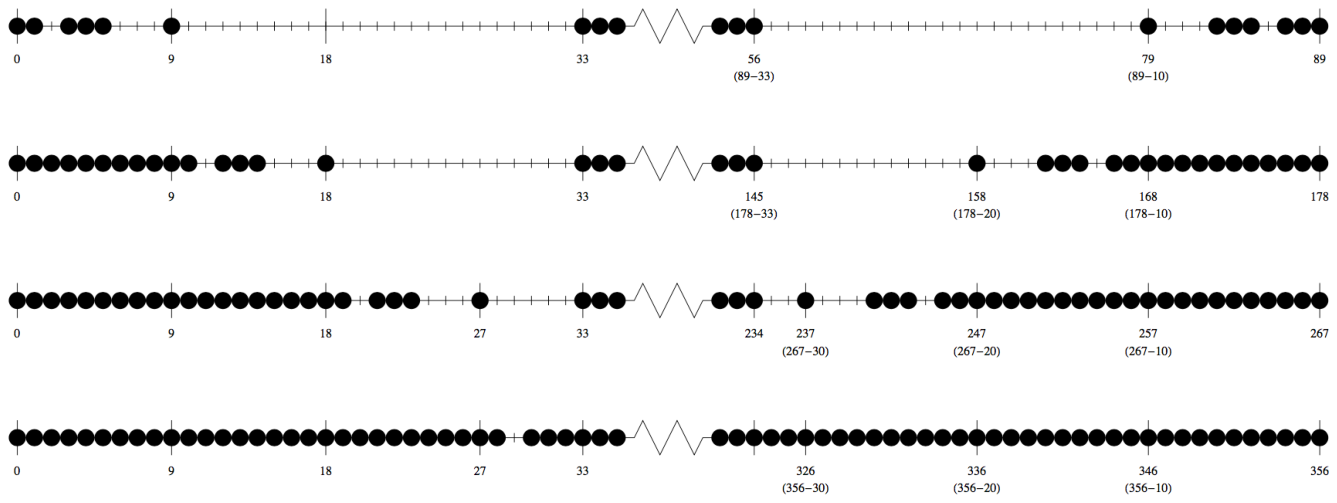}
    \caption{$A$, $A+A$, $A+A+A$, and $A+A+A+A$. The saw tooth means all elements are present in that range.}
    \label{fig:1-dimensional addition}
\end{figure}

\begin{figure}[h!]
    \centering
    \includegraphics[scale=.9]{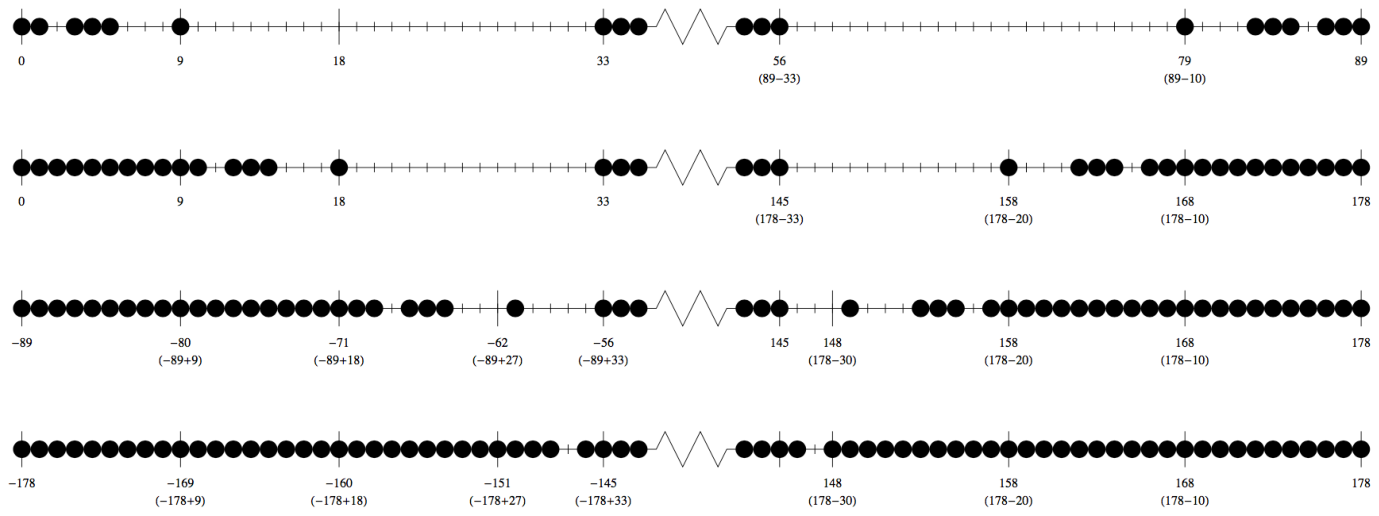}
    \caption{$A$, $A+A$, $A+A-A$, and $A+A-A-A$. The saw tooth means all elements are present in that range.}
    \label{fig:1-dimensional subtraction}
\end{figure}

We then adapt the $1$-dimensional set to prove the following theorem.

\begin{theorem}\label{generalized MSTD}
Let $s_1, d_1, s_2, d_2$ be non-negative integers such that $\{s_1, d_1\} \neq \{s_2,d_2\}$ and $s_1+d_1=s_2+d_2 \geq 2.$ Then there exists a finite, non-empty set $A$ in a sufficiently large $n \times n$ integer lattice such that $|s_1A-d_1A|> |s_2A-d_2A|.$
\end{theorem}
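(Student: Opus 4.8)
The plan is to lift the one-dimensional construction of \cite{ILMZ} into the plane in the cheapest possible way: take the explicit $1$-dimensional generalized MSTD set $A_1 \subset [0,n]$ (built from the fringes $L$, $R$ and the filled-in middle $M$ described above) and embed it as a ``thin'' two-dimensional set. The natural first attempt is $A = A_1 \times \{0\} \subset \Z^2$, but this does not quite work because a genuinely $2$-dimensional ambient lattice is required, and more importantly because we want the result to be robust — we want the sumset/difference set counts to be governed entirely by the fringe behavior exactly as in $1$ dimension. So instead I would place $A_1$ along a line of generic slope inside the $n\times n$ lattice, e.g. $A = \{(a, \lfloor \alpha a\rfloor) : a \in A_1\}$ for a suitable rational slope, or more cleanly take $A = \{(a,a) : a \in A_1'\}$ where $A_1'$ is a rescaled copy of $A_1$ chosen so that the diagonal embedding lands in $[0,n]^2$. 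The key point is that for a set contained in a single line $\ell$, the maps $sA - dA \hookrightarrow \Z^2$ all have image inside the line through the origin parallel to $\ell$, and the induced map to that line is an affine bijection onto $s A_1 - d A_1$ (after the obvious identification); hence $|sA - dA| = |sA_1 - dA_1|$ for every $s,d$.

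With that observation in hand the proof is essentially a reduction. First I would state precisely the collinearity lemma: if $A \subset \Z^2$ lies on an affine line $\ell$ with primitive direction vector $v$, and we write each element as $p_0 + t v$ with $t$ ranging over a finite set $T \subset \Z$ (or $\tfrac1m\Z$), then $sA - dA = (s-d)p_0 + \{(\sum_{i=1}^s t_i - \sum_{j=1}^d t_j')v\}$, so $|sA-dA| = |sT - dT|$, where $sT - dT$ is computed in $\Z$. This is a one-line check once the direction vector is fixed; the only mild care needed is to ensure the rescaling keeps everything integral and inside the $n\times n$ box, which is where the ``sufficiently large $n$'' hypothesis is spent (we need $n$ large enough to contain a scaled diagonal copy of $[0, N]$ for the $N = N(s_1,d_1,s_2,d_2,k)$ coming from the $1$-dimensional construction). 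Second, I would invoke the $1$-dimensional theorem of \cite{ILMZ}: given $\{s_1,d_1\}\neq\{s_2,d_2\}$ with $s_1 + d_1 = s_2 + d_2 \geq 2$, there is a finite $A_1 \subset \Z$ with $|s_1 A_1 - d_1 A_1| > |s_2 A_1 - d_2 A_1|$ — and, crucially, I should quote (or reprove via the appendix corrections the authors mention) the explicit $L \cup M \cup (n-R)$ construction, since the paper says it later extends this same construction to $d$ dimensions, so a self-contained recollection of why it is generalized-MSTD will be wanted here anyway. Combining the two gives $|s_1 A - d_1 A| = |s_1 A_1 - d_1 A_1| > |s_2 A_1 - d_2 A_1| = |s_2 A - d_2 A|$, which is the theorem.

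The main obstacle — or rather, the thing that makes this a \emph{theorem} worth stating rather than a triviality — is not the collinearity reduction itself but making sure the embedded set genuinely witnesses the full force of the higher-dimensional phenomenon the paper is after, and in particular that the later sections (fringes in $d$ dimensions, $k$-generational sets, the non-existence result of Theorem \ref{no k generational}) can build on it. If the authors want a construction that is ``really'' two-dimensional — not contained in a line — then the collinear trick is insufficient and one must instead take a product-type or ``thickened'' set, say $A = (L \cup M \cup (n-R)) \times F$ for a small auxiliary set $F$, and then control $s A - d A = (sA_1 - dA_1) \times (sF - dF)$ via $|sA - dA| = |s A_1 - d A_1|\cdot|sF - dF|$; choosing $F$ to be Sidon-like (so $|sF-dF|$ is the same constant $C_{s+d}$ for both $(s_1,d_1)$ and $(s_2,d_2)$) preserves the strict inequality. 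I would therefore structure the write-up to present the product construction as the main one (it is the version that generalizes to the parallelogram/fringe picture used later), prove the factorization identity $|s(X\times Y) - d(X\times Y)| = |sX - dX|\cdot|sY-dY|$, pick $Y = F$ a generalized Sidon set of the appropriate order so the $F$-factor is a positive constant independent of which pair $(s_i,d_i)$ we use, set $X = A_1$ from \cite{ILMZ}, and read off the inequality. The delicate step is verifying that a single finite $F$ works simultaneously for $s_1 A - d_1 A$ and $s_2 A - d_2 A$ — i.e. that $|sF - dF|$ depends only on $s + d$ when $F$ is chosen with enough additive independence (a $B_{s_1+d_1}$-type condition suffices) — and that $n$ can be taken large enough to fit $A_1 \times F$ inside the $n \times n$ lattice after an affine normalization.
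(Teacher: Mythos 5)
Your reduction-to-one-dimension strategy is a genuinely different route from the paper's. The paper does not embed or product-lift the one-dimensional set: it builds an honestly two-dimensional configuration, placing square corner blocks $B_1,B_2$ (each carrying the ILMZ fringes $L,R$ along its two boundary edges) in the four corners of the $n\times n$ lattice together with a filled-in cross-shaped middle $M$, proves a two-dimensional fringe lemma ($aB_1+bB_2$ has the same edge structure, deduced from the one-dimensional fringe lemma applied to the axes), and then counts the missing points of $s_1A-d_1A$ versus $s_2A-d_2A$ directly by comparing the locations of the missing fringe points to the boundary of $M$. Your collinear embedding $A=\{(a,a):a\in A_1\}$ does prove the literal statement (the identification $|sA-dA|=|sA_1-dA_1|$ is correct, and the theorem as stated does not forbid degenerate sets), and it is much shorter. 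What the paper's construction buys, and what your reduction does not supply, is the two-dimensional fringe machinery itself: the non-product corner-plus-cross sets are exactly the ones that get sheared into parallelograms in Section 4, lifted to $d$ dimensions in Section 5, and shown balanced for all other orders $s+d\neq k$ (Lemma \ref{same size sets}), which is the input to the base-expansion chain argument. So your proof would establish Theorem \ref{generalized MSTD} but would leave the rest of the paper without its main tool.

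There is one concrete error in your product variant. You propose $A=A_1\times F$ with $F$ ``Sidon-like'' of order $s_1+d_1$ so that $|sF-dF|$ is a constant depending only on $s+d$. This is false: $B_h$-type sets are precisely the sets for which $|sF-dF|$ depends strongly on the split of $h$ into $s$ and $d$ (already for $h=2$, a Sidon set with $q$ elements has $|F+F|=\binom{q+1}{2}$ but $|F-F|=q^2-q+1$; Sidon sets are the canonical difference-dominant examples). The additive independence you are imposing is the opposite of what you need. The fix is trivial — take $F$ to be an interval $[0,m]$ (or even $F=\{0\}$), for which $sF-dF=[-dm,sm]$ has $(s+d)m+1$ elements, depending only on $s+d$ — so the factorization $|s(A_1\times F)-d(A_1\times F)|=|sA_1-dA_1|\cdot|sF-dF|$ then preserves the strict inequality as you intend. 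With that replacement, and quoting the corrected one-dimensional ILMZ result (note the paper's appendix records that the original proof needs the cases $s_1>d_1$ and $s_1=d_1$ separated), your argument goes through.
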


Let $k= s_1+d_1=s_2+d_2.$
We begin by defining two fringe sets $B_1$ and $B_2,$ where $B_2$ is slightly larger than $B_1$: 

\begin{equation*}B_1 \ = \ ([0, 2k+1] \times [0, 2k+1]) \ \setminus [\{(2,0), (0,2)\} \cup \{ (x,0), (0,y) : k+2\leq x, y \leq 2k \}]\end{equation*}
\begin{equation}B_2 \ = \ ([0, 2k+2] \times [0, 2k+2])  \ \setminus [\{(3,0), (0,3)\} \cup \{ (x,0),(0,y) : k+3\leq x,y \leq 2k+1 \}].\end{equation}\\

\begin{figure}[h]
\includegraphics[scale=.75]{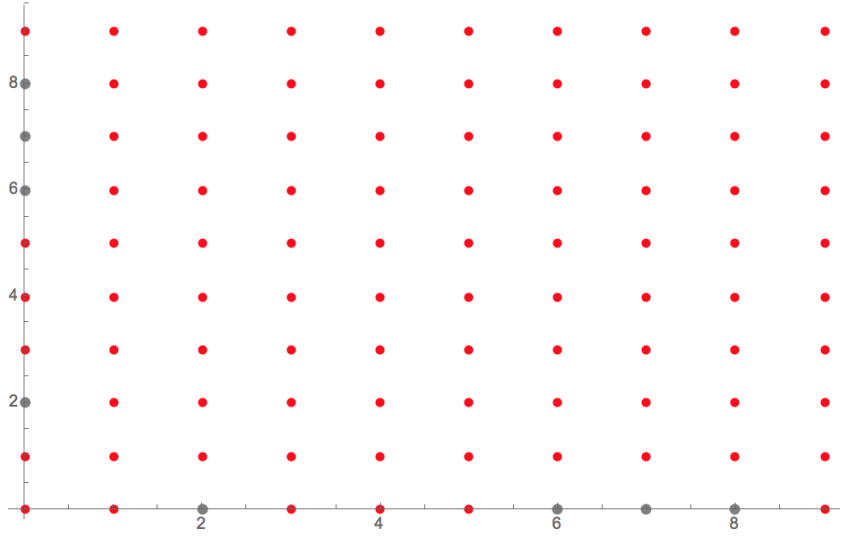}
\caption{The $B_1$ fringe set for $k=4$.}
\label{fig:B11}
\centering
\end{figure}

We now must prove a lemma characterizing the behavior of these fringe sets. We claim that for any $a, b \in \mathbb{N},$ $aB_1+bB_2$ has the same structure as $B_1$ and $B_2.$  The proof of this lemma relies on an analogous lemma in $1$-dimensions.

\begin{lemma}[ILMZ]\label{1D fringe}
Let $L=[0, 2k+1] \ \setminus (\{2\} \cup [k+2, 2k])$ and $R=[0, 2k+2]\ \setminus(\{3\}\cup [k+3, 2k+1])$. Then for all $a,b \in \mathbb{N},$ we have \begin{equation}
aL+bR \ = \ [0,2k(a+b)+(a+2b)]\ \setminus\end{equation}
$$\left(\{2k(a+b-1)+(a+2b+1)\} \cup [k(2a+2b-1)+(a+2b+1), 2k(a+b)+(a+2b-1)])\right)$$

\end{lemma}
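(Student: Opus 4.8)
The plan is to prove Lemma \ref{1D fringe} by induction on $a+b$, handling the structure of iterated sumsets of the fringe sets $L$ and $R$ directly. Observe first that $L = [0,2k+1]\setminus(\{2\}\cup[k+2,2k])$ has a small "gap" at $2$ near its left end and a large gap just below its right endpoint $2k+1$, with only the single point $2k+1$ sitting to the right of the large gap; similarly $R$ has its small gap at $3$ and a large gap below $2k+2$. The key structural fact I want to maintain inductively is that each set $aL+bR$ looks like a full interval $[0, 2k(a+b)+(a+2b)]$ with exactly one missing point just below the top, followed by one more large missing block, and everything else present.

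First I would establish the base cases $a+b=1$, i.e. $aL+bR = L$ and $aL+bR = R$, which match the claimed formula by direct inspection (for $L$: $[0,2k+1]\setminus(\{2k-1\}\cup[k,2k-1])$ wait — one must check the formula specializes correctly, so I would carefully verify $2k(1)+(1) = 2k+1$, the lone missing point is $2k\cdot 0 + 2 = 2$, and the missing block is $[k(2-1)+2, 2k+0] = [k+2,2k]$, confirming the formula). For the inductive step, suppose the formula holds for some $a,b$ with $a+b = m \geq 1$; I would then compute $(aL+bR)+L$ (if we are incrementing $a$) or $(aL+bR)+R$ (if incrementing $b$). The heart of the argument is a Minkowski-sum computation: adding a set of the form $[0,N]\setminus(\{N-j\}\cup[N-i, N-2])$ — one isolated top point $N$, a gap, then a long interval down to $0$ — to either $L$ or $R$. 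The long filled-in portion of $aL+bR$ from $0$ up to roughly $N - (k+2)$ overlaps enough with the long filled-in portion of $L$ (resp. $R$) that all the "interior" values get covered; the only delicate region is near the top, where the lone point $N$ plus the small low elements $\{0,1\}$ of $L$ (or $\{0,1,2\}$ of $R$), together with the top interval's contributions, must exactly reproduce the single missing point and the single missing block one level up.

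The main obstacle — and the step I expect to require the most care — is verifying that the arithmetic of the fringe gaps lines up exactly: specifically that the small gap (at $2$ in $L$, at $3$ in $R$) contributes the lone missing point $2k(a+b-1)+(a+2b+1)$ and the large gap contributes precisely the block $[k(2a+2b-1)+(a+2b+1),\ 2k(a+b)+(a+2b-1)]$, with no off-by-one errors and no accidental coverage of the intended missing point by some other pair of summands. I would handle this by splitting $aL+bR$ into its "bottom chunk" $[0, 2k(a+b-1)+(a+2b-1)]$ (a genuine interval, after checking the large gap starts above this) and its top point, then carefully enumerating which sums $x + \ell$ with $\ell \in L$ (or $\ell \in R$) land in the critical window just below the new top endpoint. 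The condition $n > 4(2k^2+1)$ from the construction is not needed here, but the requirement $k \geq 2$ (forced by $s_1 + d_1 \geq 2$) is what guarantees the small gap and large gap do not collide, so I would flag that hypothesis explicitly. Since this lemma is attributed to \cite{ILMZ} (with the appendix noting corrections to that paper), I would present the induction cleanly and defer the most tedious interval bookkeeping to a remark or to the cited source, emphasizing instead the structural template that the $2$-dimensional generalization in $B_1, B_2$ will need to imitate.
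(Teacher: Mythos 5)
Your plan matches the paper's: the paper attributes this lemma to \cite{ILMZ} and disposes of it in one line (``double induction, first on $a$, then on $b$''), which is the same add-one-copy-at-a-time induction you organize as a single induction on $a+b$; your base-case verification and your identification of the window just below the new top endpoint as the only delicate region are both correct. One small slip: the missing block of $aL+bR$ tops out at $2k(a+b)+(a+2b)-1$, i.e.\ at $N-1$ with only the single point $N$ present above it, not at $N-2$ as in your template $[N-i,N-2]$ --- but that is exactly the off-by-one bookkeeping you flagged, and it does not affect the argument.
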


This lemma is proved by using double induction, first on $a$, then on $b$. We can then prove our $2$-dimensional version of this lemma.

\begin{lemma}\label{2D fringe}
For all $a, b \in \mathbb{N},$ $aB_1+bB_2$, we have
$$aB_1 +bB_2 \ = \ \left([0,  2k(a+b)+(a+2b)] \times [0,  2k(a+b)+(a+2b)]\right) \ \ \setminus$$
$$[\{(2k(a+b-1)+(a+2b+1), 0), (0, 2k(a+b-1)+(a+2b+1))\} \ \cup$$
\begin{equation}\{(x,0), (0,y): k(2a+2b-1)+(a+2b+1) \leq x, y \leq 2k(a+b)+(a+2b-1)\}].\end{equation}
\end{lemma}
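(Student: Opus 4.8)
The plan is to reduce the two–dimensional statement to the one–dimensional Lemma \ref{1D fringe} by exploiting the very special geometry of $B_1$ and $B_2$: both sets consist of the full square $[0,m]\times[0,m]$ with a handful of points removed, and crucially \emph{all removed points lie on the two coordinate axes}. So I would first observe that $B_1$ decomposes as $B_1 = \big((L\times\{0\}) \cup (\{0\}\times L)\big) \cup S_1$, where $L$ is the one–dimensional fringe and $S_1$ is the ``solid part'' $([0,2k+1]\times[0,2k+1]) \setminus (\{0\}\cup\text{axis deletions})$ — the key point being that $S_1$ contains all points with both coordinates positive, i.e. $[1,2k+1]\times[1,2k+1] \subseteq B_1$, and similarly for $B_2$. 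Then I would compute $aB_1 + bB_2$ by splitting each summand of the $(a+b)$-fold sumset into ``axis'' contributions and ``interior'' contributions.

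The key steps, in order: (1) Set $m_1 = 2k+1$, $m_2 = 2k+2$, and write out precisely the decomposition of $B_i$ into its $x$-axis part (which is $L\times\{0\}$ or $R\times\{0\}$), its $y$-axis part, and its interior, noting $[1,m_i]\times[1,m_i]\subseteq B_i$. (2) Show that the sum of all $a+b$ vectors lands in $[0,N]\times[0,N]$ with $N = 2k(a+b)+(a+2b)$ trivially (coordinate bounds), so $aB_1+bB_2 \subseteq [0,N]^2$. (3) Show that any point $(x,y)$ with $x,y\geq 1$ is attained: pick any one summand to contribute an interior point (using $[1,m_i]\times[1,m_i]\subseteq B_i$ we have flexibility once the remaining budget on each coordinate is at least $a+b-1$ and at most the relevant interior max), and absorb the rest — essentially because the interior is a full rectangle of side $\geq 2k$, the $(a+b)$-fold interior sum is a full rectangle $[0,N']\times[0,N']$ or close to it, certainly covering all of $[1,N]\times[1,N]$; here one must check the off-axis region has \emph{no} missing points, which follows from the ``solid'' structure. (4) Handle the axes: a point $(x,0)\in aB_1+bB_2$ forces \emph{every} summand to have second coordinate $0$, hence forces each summand into its $x$-axis fringe $L$ or $R$; thus $\{x : (x,0)\in aB_1+bB_2\} = aL+bR$ as a set of integers, and Lemma \ref{1D fringe} gives exactly which $x$ appear. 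Symmetrically for the $y$-axis. (5) Assemble: the missing points of $aB_1+bB_2$ inside $[0,N]^2$ are precisely the missing points of $aL+bR$ placed on each axis, which is exactly the claimed formula.

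I would most likely organize step (3) as its own short sub-claim (``the interior is solid''), proved by a direct argument: since $[0,2k]\times[0,2k]$ (all nonneg. integer points, no deletions) sits inside both $B_1$ and $B_2$ shifted appropriately, and since repeated Minkowski sums of a solid axis-parallel rectangle of integer points of side $\geq 1$ with itself fill a solid rectangle, the $(a+b)$-fold interior sum covers a solid square large enough that after adjoining the boundary pieces nothing off-axis is missed up to coordinate $N$. Alternatively — and this may be cleaner — one can run a double induction on $a$ then $b$ exactly mirroring the proof of Lemma \ref{1D fringe}, with the inductive step reduced to: adding one more copy of $B_1$ (resp.\ $B_2$) to a set of the claimed form again yields the claimed form, where the axis computation is quoted verbatim from the $1$-dimensional inductive step and the interior computation is the trivial ``solid $+$ anything containing $0$ and a full small square $=$ solid'' fact.

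The main obstacle I anticipate is step (3)/(4) interplay: making fully rigorous that \emph{no} off-axis point is missing, and that an on-axis point $(x,0)$ can be formed \emph{only} by using the $x$-axis fringe in every summand (so that the one–dimensional lemma applies with no leakage from interior points whose $y$-coordinates happen to cancel — but they cannot cancel, since all coordinates are non-negative, which is exactly why this works). Care is also needed that the two ``tails'' removed near $N$ on the $x$- and $y$-axes do not interact, i.e.\ that the deleted point $(2k(a+b-1)+(a+2b+1),0)$ is genuinely absent and is not recovered as a sum involving an interior point; again non-negativity of coordinates forces any representation of a point on the $x$-axis to use only $x$-axis elements, so the absence is inherited directly from Lemma \ref{1D fringe}. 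Once that principle is stated cleanly, the rest is bookkeeping.
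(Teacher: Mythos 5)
Your proposal is correct and follows essentially the same route as the paper: reduce to Lemma \ref{1D fringe} on the two axes (using that non-negativity of all coordinates forces any representation of an axis point to use only axis summands), and observe that the off-axis region is completely filled. In fact your write-up is more careful than the paper's four-sentence proof, which never explicitly addresses why no interior point is missing — the point you rightly flag as the step needing an argument (solid-rectangle Minkowski sums, or the double induction mirroring the 1D proof).
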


\begin{proof}
One can note that our $B_1$ and $B_2$ have  copies of the $1$-dimensional fringe sets from \cite{ILMZ} on their left and bottom edges. Addition between points of the form $(x,0)$ only results in sums of the same form. Similarly, addition between points of the form $(0,y)$ only results in sums of the same form. Adding of a point of the form $(x,0)$ to a point of the form $(0,y)$ only creates a point that already was in the fringe set. Thus we can use Lemma \ref{1D fringe} on the edges to prove our lemma.
\end{proof}

We then use the fringes to construct a set $A$ that lives in a sufficiently large $n \times n$ lattice such that $|s_1A-d_1A|>|s_2A-d_2A|.$ Let $n>4(2k^2+1).$ 

In order for our fringe sets to have the proper orientation in our set, we have to slightly modify the fringe definitions. Let $B_1=B_{1,1}$ and $B_2=B_{2,1}$ and let

 $$B_{1,2} \ = \ \left([0,2k+1] \times [0,2k+1]\right) \ \setminus$$
 \begin{equation*}[\{((2k+1)-2,0), (0,2)\} \ \cup \ \{ ((2k+1)-x,0), (2k+1,y) : k+2\leq x, y \leq 2k \}]\end{equation*}
 $$B_{2,2} \ = \ \left([0, 2k+2] \times [0, 2k+2]\right) \ \setminus$$
 \begin{equation}[\{(3,2k+2),(0,(2k+2)-3)\} \ \cup \ \{ (x,2k+2),(0,(2k+2)-y) : k+3\leq x,y \leq 2k+1 \}].\end{equation}

Since for any $s,d \in \mathbb{N}$, we have $|sA-dA|=|dA-sA|,$ we assume without loss of generality that $s_1 \geq d_1$ and $s_2 \geq d_2.$

We first examine the case where $s_1>s_2$, which implies that $d_2>d_1.$ We construct a set $M$ that fills the middle of our set.
\begin{align}
M \ =& \ \{(x,y) : 2k^2+1-d_1 \leq x \leq n-(2k^2+1-d_1), 0 \leq y \leq n\} \ \cup \nonumber \\
&\{(x,y) : 2k^2+1-d_1 \leq y \leq n-(2k^2+1-d_1), 0 \leq x \leq n\}.
\end{align}

We then define
\begin{equation}A \ = \ B_{1,1} \ \cup \ [(n,n)-B_{2,1}] \ \cup \ [(2k+1, n)-B_{1,2}] \ \cup \ [(n, 2k+2)-B_{2,2}].\end{equation}

In other words, $A$ is an $n \times n$ lattice where $B_1$ has been placed in the left corners, $B_2$ has been placed in the right corners, and $M$ is a filled-in cross that goes in between (but doesn't touch) the fringes. This can be seen in Figure \ref{fig:generalized MSTD}.

\begin{figure}
\includegraphics[scale=.8]{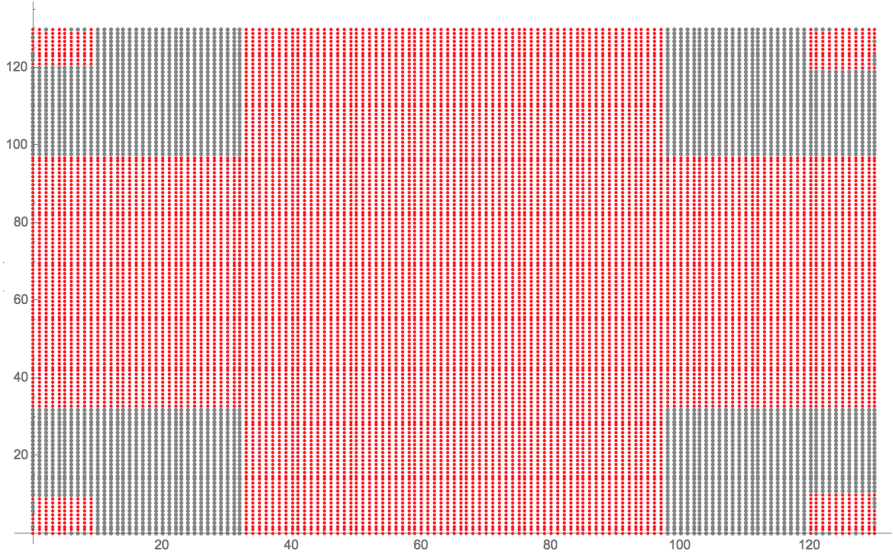}
\caption{The corresponding generalized MSTD set for $k=4$, $n=130$ and $s_1=4$, $d_1=0$ and $s_2=2$, $d_2=2$. }
\label{fig:generalized MSTD}
\centering
\end{figure}

One can see that as $M$ is the union of filled-in rectangles that we have $|s_1M-d_1M|=|s_2M-d_2M|$.
\\

We then examine the fringes. We begin with the left two fringes of $s_1A-d_1A$. These are identical up to rotation, so without loss of generality, we just examine one.  The portion which does not lie in $M$ is (up to translation)  $\left(s_1B_1+d_1B_2\right) \cap ([0, 2k^2-d_1] \times [0, 2k^2-d_1]) .$ We then compare $2k^2-d_1$ to $2k(s_1+d_1-1)+(s_1+2d_1+1)$.

By Lemma \ref{2D fringe}, we know that the the smallest missing points in the fringe are $(x,0)$ and $(0,y)$ where $x,y=2k(s_1+d_1-1)+(s_1+2d_1+1)$ and the smallest points in the missing intervals are $(x,0)$ and $(0,y)$ where $x,y=k(2s_2+2d_2-1)+(s_1+2d_2+1)$. We then compare these points to $2k^2-d_1$, to see how many points are missing from the portion of the fringe that does not lie in $M$ for $s_1A-d_1A$. When $s_1 > d_1$, we have 
\begin{align}
 2k^2 -d_1 &\ \geq \  2k^2-s_1 +1 \nonumber \\
 &\ = \ 2k(s_1+d_1)-k+d_1 +1 \nonumber \\
 &\ = \ s_1(2k+1)+d_1(2k+2)-2k+1 \nonumber \\
 &\ = \ 2k(s_1+d_1-1)+(s_1+2d_1+1)
\intertext{and} \nonumber 
 2k^2 -d_1 &\ < \ 2k^2+d_1+1 \nonumber \\
 &\ = \ k(2k-1)+(k+d_1+1) \nonumber \\
 &\ = \ k(2s_2+2d_2-1)+(s_1+2d_2+1).
\end{align}
%\end{equation*}

Therefore, we know that $$\left(s_1B_1+d_1B_2\right) \cap  ([0, 2k^2-d_1] \times [0, 2k^2-d_1]) \ = \ ([0, 2k^2-d_1] \times [0, 2k^2-d_1]) \ \setminus$$ \begin{equation}((2k(s_1+d_1-1)+(s_1+2d_1+1), 0) \ \cup \ (0, 2k(s_1+d_1-1)+(s_1+2d_1+1))). \end{equation} Thus we are missing $4$ elements, $2$ in each fringe.

When $s_1 = d_1$, then we have 
\begin{align}
 2k^2 -d_1 &\ < \ 2k^2-s_1 +1 \nonumber \\
 &\ = \ 2k(s_1+d_1)-k+d_1 +1 \nonumber \\
 &\ = \ s_1(2k+1)+d_1(2k+2)-2k+1 \nonumber \\
 &\ = \ 2k(s_1+d_1-1)+(s_1+2d_1+1).
 \end{align}
We then know that \begin{equation}(s_1B_1+d_1B_2) \cap ([0, 2k^2 -d_1] \times [0, 2k^2 -d_1]) \ = \ [0, 2k^2 -d_1] \times [0, 2k^2 -d_1].\end{equation}Thus we are not missing any elements. 

We then examine the right two fringes of $s_1A-d_1A$. These are identical up to a rotation, so without loss of generality, we just examine one.  The portion which does not lie in  $M$ is  (up to translation) $(d_1B_1+s_1B_2) \cap ([0,2k^2-d_1] \times [0,2k^2-d_1]).$ Again by Lemma \ref{2D fringe}, we know that the the smallest missing points in the fringe are $(x,0)$ and $(0,y)$ where $x,y=2k(d_1+s_1-1)+(d_1+2s_1+1)$. Thus we then compare $2k(d_1+s_1-1)+(d_1+2s_1+1)$ to $2k^2-d_1$, to show that no points are missing from the portion of the fringe that does not lie in $M$. We have
\begin{align}
2k^2 -d_1 & \ < \ 2k^2 -d_1 +1 \nonumber \\
&\ = \ 2k^2-k+s_1 +1 \nonumber \\
&\ = \ 2k(d_1+s_1-1)+(d_1+2s_1+1).       
\end{align}

Therefore, we know that \begin{equation} (d_1B_1+s_1B_2) \cap ([0,2k^2-d_1] \times [0,2k^2-d_1]) \ = \ [0,2k^2-d_1] \times [0,2k^2-d_1]. \end{equation} Thus these fringes do not have any missing elements.

We then turn our attention to $s_2A-d_2A.$ We first  examine the  two left fringes of $s_2A-d_2A$. These are identical up to a rotation, so without loss of generality we just examine one.  The portion which does not lie in  $M$  is (up to translation) $(s_2B_1+d_2B_2) \cap ([0,2k^2-d_1] \times [0,2k^2-d_1]).$ By Lemma \ref{2D fringe}, we know that the the smallest missing points in the fringe are $(x,0)$ and $(0,y)$ where $x,y=2k(s_2+d_2-1)+(s_2+2d_2+1)$ and the smallest points in the missing intervals are $(x,0)$ and $(0,y)$ where $x,y=k(2s_2+2d_2-1)+(s_2+2d_2+1)$. We then compare these points to $2k^2-d_1$, to see how many points are missing from the portion of the fringe that does not lie in $M$. We note that $s_1>s_2\geq d_2>d_1.$ Therefore
\begin{align}
 2k^2 -d_1 & \ \geq \ 2k^2s_2 +1 \nonumber \\
 &\ = \ 2k(s_2+d_2)-k+d_2 +1 \nonumber \\
 &\ = \ s_2(2k+1)+d_2(2k+2)-2k+1 \nonumber \\
 &\ = \ 2k(s_2+d_2-1)+(s_2+2d_2+1)
 \intertext{and} \nonumber 
 2k^2 -d_1&\ < \ 2k^2+d_2+1 \nonumber \\
 &\ = \ k(2k-1)+(k+d_2+1) \nonumber \\
 &\ = \ k(2s_2+2d_2-1)+(s_2+2d_2+1).
\end{align}
Therefore, we know that 
$$(s_2B_1+d_2B_2) \cap ([0,2k^2-d_1] \times [0,2k^2-d_1]) \ = \ ([0,2k^2-d_1] \times [0,2k^2-d_1]) \ \setminus$$ 
\begin{equation}((2k(s_2+d_2-1)+(s_2+2d_2+1), 0) \ \cup \ (0,2k(s_2+d_2-1)+(s_2+2d_2+1))).\end{equation} Thus we are missing $4$ elements, $2$ in each fringe. 

We then, finally, examine the right two fringes of $s_2A-d_2A$. These are identical up to a rotation, so without loss of generality, we just examine one.  The portion which does not lie in $M$  is (up to translation) $(d_2B_1+s_2B_2) \cap ([0,2k^2-d_1] \times [0,2k^2-d_1]).$  By Lemma \ref{2D fringe}, we know that the the smallest missing points in the fringe are $(x,0)$ and $(0,y)$ where $x,y=2k(d_2+s_2-1)+(d_2+2s_2+1)$ and the smallest points in the missing intervals are $(x,0)$ and $(0,y)$ where $x,y=k(2d_2+2s_2-1)+(d_2+2s_2+1)$. We then compare these points to $2k^2-d_1$, to see how many points are missing from the portion of the fringe that does not lie in $M$. We have
\begin{align} 
2k^2 -d_1 & \ \geq \ 2k^2 -d_2 + 1 \nonumber \\
&\ =  \ 2k^2-k+s_2 +1 \nonumber \\
&\ = \ 2k(d_2+s_2-1)+(d_2+2s_2+1)
\intertext{and} \nonumber 
 2k^2 -d_1&\ < \ 2k^2+s_2+1 \nonumber \\
 &\ = \ k(2k-1)+(k+s_2+1) \nonumber \\
 &\ = \ k(2d_2+2s_2-1)+(d_2+2s_2+1).
\end{align}
 Therefore, we know that $$d_2B_1+s_2B_2 \cap ([0,2k^2-d_1] \times [0,2k^2-d_1]) \ = \ ([0,2k^2-d_1] \times [0,2k^2-d_1]) \ \setminus$$  
 \begin{equation}((2k(d_2+s_2-1)+(d_2+2s_2+1), 0) \ \cup \ (0, 2k(d_2+s_2-1)+(d_2+2s_2+1)).\end{equation} Therefore these two fringes have $4$ missing elements.\\

Thus we have shown for $s_1>s_2$  that $|s_1A-d_1A| > |s_2A-d_2A|.$\\

We then consider the case where $s_2>s_1$. We define $$M \ = \ \{(x,y) : 2k^2+1-s_1 \leq x \leq n-(2k^2+1-s_1), 0 \leq y \leq n\} \ \cup$$ 
\begin{equation}\{(x,y) : 2k^2+1-s_1 \leq y \leq n-(2k^2+1-d_1), 0 \leq x \leq n\}.\end{equation} Thus $A$ is constructed in the same manner as for $s_1>s_2$. The proof is  identical to show $|s_1A-d_1A| > |s_2A-d_2A|.$  \qed \\

We claim that we can create more constructions of generalized MSTD sets by applying injective linear transformations to the sets we just created. 

\begin{theorem}\label{linear transformation}
All MSTD sets are preserved under injective linear transformations.
\end{theorem}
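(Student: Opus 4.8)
The plan is to exploit the fact that a linear map commutes with the formation of sumsets and difference sets, so that passing from $A$ to $T(A)$ can only change \emph{cardinalities}, and injectivity pins those down exactly.

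First I would fix notation: let $A\subset\Z^d$ be an MSTD set and let $T$ be a linear transformation that is injective on the finite set $(A+A)\cup(A-A)$ (in particular, any linear $T$ with trivial kernel works). The first key step is the elementary identity, valid for any linear $T$ and any sets $X,Y$,
\[
T(X+Y) \ = \ T(X)+T(Y), \qquad T(X-Y) \ = \ T(X)-T(Y),
\]
which follows at once from $T(x\pm y)=T(x)\pm T(y)$. Applying it with $X=Y=A$ gives the set equalities $T(A+A)=T(A)+T(A)$ and $T(A-A)=T(A)-T(A)$. The second key step is that an injective map preserves the size of a finite set, so $|T(S)|=|S|$. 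Combining the two steps,
\[
|T(A)+T(A)| \ = \ |T(A+A)| \ = \ |A+A| \ > \ |A-A| \ = \ |T(A-A)| \ = \ |T(A)-T(A)|,
\]
so $T(A)$ is again MSTD, which is the claim.

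I do not expect a genuine obstacle here; the only points that require a moment's care are (i) reading ``injective linear transformation'' as injective on the relevant finite set, so as to exclude degenerate projections that collapse $A+A$ while keeping $T$ nonsingular is not needed—triviality of the kernel already suffices—and (ii) observing that the identical argument goes through verbatim with $A+A$ and $A-A$ replaced by $s_1A-d_1A$ and $s_2A-d_2A$ (and more generally by any $xA-yA$), since $T(sA-dA)=sT(A)-dT(A)$ for all non-negative $s,d$. Consequently the same one-line reasoning shows that injective linear images of the explicit sets built in Theorem~\ref{generalized MSTD} (and of the chains and $k$-generational sets constructed later) remain generalized MSTD sets, chains, and $k$-generational sets, so composing the parallelogram constructions above with an arbitrary injective integer matrix produces an entire orbit of further examples.
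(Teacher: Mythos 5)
Your argument is correct and is essentially the paper's own proof: both rest on the identity $T(X\pm Y)=T(X)\pm T(Y)$ for linear $T$ together with the fact that an injection preserves the cardinality of a finite set, and both note that the same reasoning extends to $sA-dA$ (the paper by a brief appeal to induction, you by the same one-line identity). Your additional observation that injectivity on the finite set $(A+A)\cup(A-A)$ already suffices is a harmless slight weakening of the hypothesis, not a different method.
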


\begin{proof}
We first show that the size of the sumsets and difference sets are preserved under injective linear transformation. Let $A \subseteq \mathbb{Z}^d$ and $T : \mathbb{Z}^d \to \mathbb{Z}^d$ be an injective linear transformation.
As $T$ in injective, we know that $|A+A|=|T(A+A)|$ and $|A-A|=|T(A-A)|.$ Let $a_1, a_2 \in A.$ As $T$ is a linear transformation, we have $T(a_1+a_2)=T(a_1)+T(a_2)$ and $T(a_1-a_2)=T(a_1)-T(a_2)$. Thus $|T(A+A)|=|TA+TA|$ and $|T(A-A)|=|TA-TA|$. We then conclude that $|A+A|=|TA+TA|$ and $|A-A|=|TA-TA|$. Induction can show that the the size of $sA+dA$ is preserved under $T.$
\end{proof}

\section{$2$-Dimensional Chains of Generalized MSTD Sets}

In this section we first  prove the following theorem on the existence of chains of generalized MSTD sets. We use the same sets that we constructed in the proof of Theorem \ref{generalized MSTD} in the previous section.

\begin{theorem}\label{chain of gen MSTD}
Let $\{x_{j}\}_{j=1}^k$,  $\{y_{j}\}_{j=1}^k$, $\{w_{j}\}_{j=1}^k$, and $\{z_{j}\}_{j=1}^k$ be finite sequences of non-negative integers of length $k$ such that $x_j +y_j=w_j +y_j=j,$ and $\{x_j, y_j\} \neq \{w_j, z_j\}$ for every $2 \leq j \leq k.$ There exists a  $2$-dimensional set A that satisfies $|x_jA-y_jA| >|w_jA-x_jA|$ for every $2 \leq j \leq k.$
%A positive percentage of sets A satifiy $|x_jA-y_jA| >|w_jA-x_jA|$ for every $2 \leq j \leq k.$$
\end{theorem}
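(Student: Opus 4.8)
The plan is to reuse the sets built in Theorem~\ref{generalized MSTD} as building blocks---one per level---and to glue (rescaled) translated copies of them, together with solid padding, into a single large lattice. First I would normalize: since $|sA-dA|=|dA-sA|$, for each $2\le j\le k$ I may assume $x_j\ge y_j$ and $w_j\ge z_j$; since $\{x_j,y_j\}\ne\{w_j,z_j\}$ and $x_j+y_j=w_j+z_j=j$, the two pairs are then genuinely different, so $x_j\ne w_j$. Running the construction of Theorem~\ref{generalized MSTD} with $(s_1,d_1)=(x_j,y_j)$ and $(s_2,d_2)=(w_j,z_j)$ produces, for each $j$, a gadget $G_j$: a square lattice carrying the fringe sets $B_1,B_2$ in its corners and a solid cross $M_j$ in the middle, for which that proof gives $|x_jG_j-y_jG_j|=|w_jG_j-z_jG_j|+4$, the surplus of $4$ coming from two of the corner fringes---there the inner edge of $M_j$ truncates the Minkowski sum $aB_1+bB_2$ (described by Lemma~\ref{2D fringe}) strictly between the smallest missing points attached to the two splits of $j$, while the remaining corners and the solid cross contribute equally to both.

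Next I would assemble $A$. Pick rapidly growing scalars $\Lambda_2\ll\Lambda_3\ll\dots\ll\Lambda_k$ and translates placing the rescaled gadgets $\Lambda_jG_j+v_j$ pairwise far apart inside a huge lattice $[0,N]^2$, and let $\mathcal M$ be a union of axis-parallel solid rectangles filling the room between (but not touching) the gadgets and their corner fringes; set $A=\mathcal M\cup\bigcup_{j=2}^k(\Lambda_jG_j+v_j)$. For a fixed $2\le j\le k$, expand $x_jA-y_jA$ by how many of the $j$ summands are drawn from each piece. Any term using a summand from $\mathcal M$, or summands from two distinct gadgets, is a Minkowski sum of a solid box with other sets, hence a union of solid rectangles; and since signed sums of solid boxes depend only on the number of summands, the union of all such ``bulk'' terms is the same set for $(x_j,y_j)$ as for $(w_j,z_j)$. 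Any term drawing all $j$ summands from a single $\Lambda_pG_p+v_p$ contributes, near that gadget's corners, a rescaled translate of a truncated $aB_1+bB_2$ set (Lemma~\ref{2D fringe}), and because the gadgets are far apart these ``corner'' contributions are pairwise disjoint and disjoint from the bulk. Hence $|x_jA-y_jA|-|w_jA-z_jA|$ equals $\sum_{p=2}^k$ of (points missing near $\Lambda_pG_p+v_p$ in $x_jA-y_jA$) minus (the same in $w_jA-z_jA$); the $p=j$ term is the surplus $4$ from the previous paragraph, so it remains to show every $p\ne j$ term vanishes.

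This last point is the crux, and where real care is needed. By Lemma~\ref{2D fringe} a truncated fringe misses, on each of its two edges, nothing, or just its one isolated point, or that point together with a missing interval; the isolated-point count and the interval length do not depend on how $j$ is split, so $G_p$'s corner contribution to $x_jA-y_jA$ equals that to $w_jA-z_jA$ unless the truncation lands \emph{either} strictly between the two splits' isolated points \emph{or} strictly inside one of the two splits' missing intervals. For $p<j$ the structure seen at level $j$ lies beyond $G_p$'s cross, so $G_p$ contributes nothing; for $p>j$ one must keep $G_p$'s cross from cutting $G_p$'s level-$j$ interval, and for the single delicate case where $p$ and $j$ are adjacent this is not automatic under a naive placement. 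The plan is to handle it following the one-dimensional chain construction of \cite{ILMZ}: build $A$ inductively in $j$, adjoining $G_{j+1}$ at a scale $\Lambda_{j+1}$ large enough to keep it geometrically separated and with the inner edge of $M_{j+1}$ chosen (using the freedom left open by Theorem~\ref{generalized MSTD}) so that, at every level $\le k$, $G_{j+1}$'s fringe is either untouched by its cross or wholly swallowed by it---hence a balanced contribution---except at level $j+1$ itself, where it sits in the discrepancy window; adjoining $G_{j+1}$ then leaves all earlier comparisons intact, merely inserting a balanced factor into each. I expect the main obstacle to be exactly this bookkeeping---making the finitely many placement conditions (one for each ordered pair of levels) simultaneously satisfiable, and verifying that the ``bulk'' genuinely fills solid rectangles with no hidden gaps, which uses that each gadget contains a cross reaching from edge to edge of its box---while each individual condition is of the elementary type already checked in the proof of Theorem~\ref{generalized MSTD}.
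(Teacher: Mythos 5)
Your first ingredient is correct and matches the paper: for each level $j$, take the gadget of Theorem \ref{generalized MSTD} built for the pair of splits $\{x_j,y_j\}$, $\{w_j,z_j\}$, and observe that this gadget is balanced at every other level (this is exactly the content of Lemma \ref{same size sets}, and your sketch of why---comparing the truncation line of the cross to the locations of the missing fringe points for $c\neq k$ summands---is the right computation). The gap is in the gluing. The paper does not form a union of far-apart rescaled translates padded with solid rectangles; it forms the Minkowski sum $A=A_1+mA_2+\cdots+m^{k-1}A_k$ with $m$ large (base expansion, Lemmas \ref{combining sets} and \ref{combining sets induction}), so that every element of $sA-dA$ has a unique ``digit'' decomposition and hence $|sA-dA|=\prod_{i}|sA_i-dA_i|$ for $s+d\le k$. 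The chain inequality is then immediate: at level $j$ the $j$-th factor is strictly larger for $(x_j,y_j)$ than for $(w_j,z_j)$ and all other factors agree.

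Your union-based gluing, by contrast, contains a step that fails as stated and a crux you acknowledge is open. The failing step: a term of $x_jA-y_jA$ drawing its summands from two distinct gadgets and none from $\mathcal{M}$ (e.g.\ $\Lambda_pG_p+\Lambda_qG_q+v_p+v_q$) is not ``a Minkowski sum of a solid box with other sets''---after rescaling by the large $\Lambda$'s each gadget is a sparse set, so these cross terms are sparse, are not unions of solid rectangles, and it is not clear they coincide for the two splits, since different splits produce different translates. Even for the genuinely bulk terms, the assertion that signed sums of solid boxes depend only on the number of summands is true only up to translation, and a union of differently translated boxes need not be split-independent. Finally, the per-gadget balancing for $p\neq j$ inside the assembled set is exactly what you defer to an inductive placement argument that is never carried out, and you concede the adjacent-level case is problematic. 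All of these difficulties evaporate under base expansion, which is why \cite{ILMZ} and this paper use it; replacing the gluing in your second and third paragraphs with Lemma \ref{combining sets induction} closes the argument.
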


%\begin{theorem}
%For each $k,$ there exists a $k$-generational set. That is, for each $k$, there exists a set A such that $|cA+cA|>|cA-cA|$ for all $1\leq c \leq k.$
%\end{theorem}

To do this we first prove a series of lemmas. 

\begin{lemma}\label{same size sets}
For the set $A$ constructed in the previous section, we have $|s_1A-d_1A|=|s_2A-d_2A|$ for any $s_1+d_1=s_2+d_2$ such that $s_1+d_1 \neq k.$

\end{lemma}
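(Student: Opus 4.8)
The plan is to track carefully which points are missing from the sumset-difference set $s_1A - d_1A$ when $s_1 + d_1 \neq k$, and to show that the same count is obtained for any other pair $(s_2, d_2)$ with the same sum. As in the proof of Theorem \ref{generalized MSTD}, the middle block $M$ contributes the same (saturated) count regardless of the splitting, since $|s_1M - d_1M| = |s_2M - d_2M|$ whenever $s_1+d_1 = s_2+d_2$; so it suffices to analyze the four fringe regions. Up to rotation, a left fringe of $sA - dA$ restricted to the part not covered by $M$ is (up to translation) $(sB_1 + dB_2) \cap ([0, 2k^2 - d_1]^2)$, and a right fringe is $(dB_1 + sB_2) \cap ([0, 2k^2 - d_1]^2)$ — exactly the objects controlled by Lemma \ref{2D fringe}.

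First I would invoke Lemma \ref{2D fringe} with $(a,b) = (s,d)$ and with $(a,b) = (d,s)$ to locate the relevant thresholds: the smallest missing fringe points sit at coordinate $2k(s+d-1) + (s+2d+1)$ (resp. with $s,d$ swapped), and the missing intervals begin at coordinate $k(2(s+d)-1) + (s+2d+1)$. The key observation is that $s + d = s_1 + d_1 = k$ is assumed to fail, so in fact $s + d \le k - 1$ for every admissible pair appearing in the lemma's hypothesis (since $s+d = s_1+d_1$ is fixed and, by the convention $\{s_1,d_1\}\neq\{s_2,d_2\}$ being irrelevant here, what matters is only the common value). Then I would run the same two-sided comparison against $2k^2 - d_1$ as in Theorem \ref{generalized MSTD}: when $s+d \le k-1$, a short computation shows $2k(s+d-1)+(s+2d+1) > 2k^2 - d_1$, so even the smallest missing fringe point already lies beyond the window $[0, 2k^2-d_1]$. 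Hence $(sB_1+dB_2)\cap([0,2k^2-d_1]^2)$ is the full square with no omissions, and likewise for $(dB_1+sB_2)$. Therefore, for every such pair $(s,d)$, all four fringes are completely filled, so $s_1A - d_1A$ and $s_2A - d_2A$ have exactly the same cardinality (the common middle count plus the same full fringe count).

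The main obstacle is the inequality $2k(s+d-1)+(s+2d+1) > 2k^2 - d_1$ for all pairs with $s+d \le k-1$: one must check it uniformly in $d$ (the left side is increasing in $d$ for fixed $s+d$, so the worst case is $d = 0$, i.e. $s = s+d \le k-1$), and also handle the boundary behavior when $s+d$ could in principle exceed $k$ — but since $s+d$ is a fixed constant equal to $s_1+d_1 \neq k$, I would split into the cases $s+d < k$ and $s+d > k$ and verify the threshold inequality in each; in the latter case the same monotonicity argument gives an even larger gap. A secondary point to be careful about: the window half-width is $2k^2 - d_1$ with the specific $d_1$ from the construction of $M$, not $2k^2 - d$; I would note that $0 \le d_1 \le k$ only shifts the window by a bounded amount and does not affect the strict inequality, since the gap produced is on the order of $2k(s+d) - 2k^2$, which dominates any $O(k)$ correction once the cases are separated. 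Once the fringe-emptiness is established, the conclusion that $|s_1A - d_1A| = |s_2A - d_2A|$ is immediate.
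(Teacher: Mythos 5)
There is a genuine gap, and it sits exactly where you anticipated the ``main obstacle'': the inequality $2k(s+d-1)+(s+2d+1) > 2k^2-d_1$ is \emph{false} when $s+d\le k-1$. Writing $c=s+d\le k-1$, the left side is $2kc-2k+(c+d+1)\le 2k(k-1)-2k+(2k-1)=2k^2-2k-1$, while the right side is at least $2k^2-(k-1)$; so the smallest missing fringe point lies well \emph{inside} the window $[0,2k^2-d_1]^2$, not beyond it. Indeed the entire set $sB_1+dB_2$ is contained in $[0,2k(s+d)+(s+2d)]^2$, a square of side at most $2k^2-2k-1$, so for $s+d<k$ the fringe sums do not even reach the boundary of the window, let alone fill it. Your conclusion that ``all four fringes are completely filled'' therefore fails in precisely the case the lemma is mostly about. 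You have the two cases reversed: the comparison you describe (missing points pushed past the window and absorbed by the filled middle) is the correct mechanism only when $s+d>k$, where $2k(s+d-1)+(s+2d+1)\ge 2k^2+k+d+2>2k^2+1-d_1$ does hold; that is exactly how the paper disposes of the case $s_1+d_1=k+c$, $c\ge 1$.

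For $s+d<k$ the paper argues in the opposite direction: it shows the fringe regions are \emph{disjoint} from the middle (the farthest fringe point, at coordinate $2k(s+d)+(s+2d)$, falls short of $2k^2+1-d_1$), and then the burden shifts to showing that the four fringe contributions have the same total cardinality for the two splittings, i.e.\ that $2|s_1B_1+d_1B_2|+2|d_1B_1+s_1B_2|=2|s_2B_1+d_2B_2|+2|d_2B_1+s_2B_2|$. That counting step is the real content of this case and is entirely absent from your proposal; note it is not automatic, since by Lemma \ref{2D fringe} the set $aB_1+bB_2$ is a full square of side $2k(a+b)+(a+2b)$ minus $2k$ edge points, and the side length depends on $a+2b$ rather than only on $a+b$, so one must account carefully for what the ambient window contributes before the per-splitting counts can be matched. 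Your secondary remark that the $O(k)$ shift in the window ``does not affect the strict inequality'' is moot once the inequality itself points the wrong way.
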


\begin{proof}
As $M$ is the union of two filled-in rectangles, we first note that $s_1M-d_1M=s_2M-d_2M$. Thus for this proof we only consider the fringes. Since $|sA-dA|=|dA-sA|,$ we suppose without loss of generality that $s_1\geq d_1$.

We  begin with the case where $s_1+d_1=s_2+d_2 > k$ or in other words $s_1+d_1=s_2+d_2=k+c,$ where $c\geq 1$

We note for the left fringes in $s_1A-d_1A,$  rotating and translating them to lie in the bottom left corner, the two empty points on the edges closest to the corner have the coordinates 
$ (2k(s_1+d_1-1)+(s_1+2d_1+1), 0)$ and $(0, 2k(s_1+d_1-1)+(s_1+2d_1+1))$.  For the right fringes these points are $ (2k(d_1+s_1-1)+(d_1+2s_1+1), 0)$ and $(0, 2k(d_1+s_1-1)+(d_1+2s_1+1)).$

Similarly, we note for the left fringes in $s_2A-d_2A$,  rotating and translating them to lie in the bottom left corner, the two empty points on the edges closest to the corner have the coordinates 
$ (2k(s_2+d_2-1)+(s_2+2d_2+1), 0)$ and $(0, 2k(s_2+d_2-1)+(s_2+2d_2+1))$.  For the right fringes these points are $ (2k(d_2+s_2-1)+(d_2+2s_2+1), 0)$ and $(0, 2k(d_2+s_2-1)+(d_2+2s_2+1)).$

In the cases where $s_1> s_2$, recall that  $$M \ = \ \{(x,y) : 2k^2+1-d_1 \leq x \leq n-(2k^2+1-d_1), 0 \leq y \leq n\} \ \cup$$ 
\begin{equation}\{(x,y) : 2k^2+1-d_1 \leq y \leq n-(2k^2+1-d_1), 0 \leq x \leq n\}.\end{equation} We then compare $2k^2+1-d_1$, the smallest $x$ or $y$ can be for $(x,0) \in M$ or $(0,y) \in M$, with $2k(s_1+d_1-1) +(s_1+2d_1+1)$, the $x$ or $y$ coordinate for the smallest missing point in the left fringe of $s_1A-d_1A$:

\begin{align}
2k(s_1+d_1-1) +(s_1+2d_1+1) &\ = \ 2k(k+c-1)+(k+c+d_1+1) \nonumber \\
&\ > \ 2k^2+(k+d_1+1) \nonumber \\
&\ > \ 2k^2+1-d_1.
\end{align}

Thus the left fringe of $s_1A-d_1A$ is not missing any points.
This same inequality can be used for $(2k(d_1+s_1-1)+(d_1+2s_1+1)), (2k(s_2+d_2-1)+(s_2+2d_2+1))$, and  $(2k(d_2+s_2-1)+(d_2+2s_2+1)). $ Therefore the fringes are not missing any points. 

In the case where $s_2>s_1$ and thus 
\begin{align}
M \ =& \ \{(x,y) : 2k^2+1-s_1 \leq x \leq n-(2k^2+1-s_1), 0 \leq y \leq n\} \ \cup \nonumber \\
&\{(x,y) : 2k^2+1-s_1 \leq y \leq n-(2k^2+1-d_1), 0 \leq x \leq n\},\end{align} the same argument shows that the fringes are not missing any points.

%We then examine the case where $s_1+d_1=s_2+d_2<k.$ We examine when $k=3$ and when $k\geq 4.$ We do not examine when $k=2$ because then examining $s_1+d_1=s_2+d_2<k$ would only give us $A$ and $-A,$ which are trivial sets. When $k=3$, then the only nontrivial case to examine is when $s_1=2, d_1=0, s_2=0, d_2=1.$ Simply by plugging in these numbers, one can see that
%$s_1A-d_1A$ and $s_2A-d_2A$ are both missing $8$ elements from their fringes.

We then examine the case where $s_1+d_1=c<k$. We show that the fringes do not intersect the middle. Up to translation, the end points for the left fringes of $s_1A-d_1A$ are $(2k(s_1+d_1)+(s_1+2d_1), 0)$and $(0, 2k(s_1+d_1)+(s_1+2d_1))$, the end points for the right fringes of $s_1A-d_1A$ are  $(2k(d_1+s_1)+(d_1+2s_1),0)$ and $(0,2k(d_1+s_1)+(d_1+2s_1))$, the end points for the left fringes of $s_2A-d_2A$ are $(2k(s_2+d_2)+(s_2+2d_2),0)$ and $(0, 2k(s_2+d_2)+(s_2+2d_2)),$ and finally the end points for the right fringes of $s_2A-d_2A$ are $(2k(d_2+s_2)+(d_2+2s_2),0)$ and $(0,2k(d_2+s_2)+(d_2+2s_2)).$  

As the argument for all of these points is the same, without loss of generality, we just examine the point $(2k(s_1+d_1)+(s_1+2d_1), 0)$.

In the case where $s_1>s_2,$ the  point in the middle section that is closest to $(2k(s_1+d_1)+(s_1+2d_1), 0)$  is $(2k^2+1-d_1,0)$. 
We note 

\begin{align}
2k(s_1+d_1)+(s_1+2d_1)&\ = \ 2kc+c +d_1 \nonumber \\
&\ \leq \ 2k(k-1)+(k-1) +d_1 \nonumber \\
&\ \leq \ 2k^2 -k -1 -d_1 \nonumber \\
&\ = \ 2k^2 -s_1 -1 \nonumber \\
&\ \leq \ 2k^2 -d_1 -1 \nonumber \\
&\ \leq \ 2k^2+1-d_1.
\end{align}

Thus the fringes do not intersect the middle. The argument for the the middle section when $s_2>s_1$ is practically identical. 

As $|s_1M-d_1M|=|s_2M-d_2M|$, it now suffices to show that \begin{equation}2|s_1B_1-d_1B_2| +2|d_1B_1-s_1B_2| \ = \ 2|s_2B_1-d_2B_2| +2|d_2B_1-s_2B_2|. \end{equation}

We have that $|s_1B_1-d_1B_2|=4k(s_1+d_1)-2k$, $|d_1B_1-s_1B_2|=4k(d_1+s_1)-2k$, $|s_2B_1-d_2B_2|=4k(s_2+d_2)-2k$, and  $|d_2B_1-s_2B_2|=4k(d_2+s_2)-2k$. Therefore, the fringe sets are all the same size, which implies that $|s_1A-d_1A|=|s_2A-d_2A|$.
\end{proof}

The next lemma defines the base expansion method in $2$-dimensions and proves a property about sets created through base expansion.

\begin{lemma}\label{combining sets}
Fix a positive integer $k$. Let $A, B \subset \mathbb{N}_0^2$ and chose $m>k \cdot \max(\{a:(a,y) \text{ or } (x,a) \in A\}).$ Let $C=A+m\cdot B$ (where $m \cdot  B$ represents the usual scalar multiplication). Then $|sC-dC|=|sA-dA||sB-dB|$ whenever $s+d \leq k.$
\end{lemma}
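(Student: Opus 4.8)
\textbf{Proof proposal for Lemma \ref{combining sets}.}

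The plan is to exhibit an explicit bijection between $sC - dC$ and the Cartesian-type product of $sA - dA$ with $sB - dB$. The key observation is that the scaling factor $m$ is large enough to prevent any ``carrying'' between the $A$-part and the $B$-part of the representation. Concretely, fix $s,d$ with $s+d \le k$. Every element of $sC - dC$ has the form $\sum_{i=1}^s c_i - \sum_{j=1}^d c'_j$ where each $c_i = a_i + m b_i$ and $c'_j = a'_j + m b'_j$ with $a_i, a'_j \in A$ and $b_i, b'_j \in B$. Grouping terms, this equals $\left(\sum a_i - \sum a'_j\right) + m\left(\sum b_i - \sum b'_j\right)$, i.e. it is of the form $\alpha + m\beta$ with $\alpha \in sA - dA$ and $\beta \in sB - dB$. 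The map $sC - dC \to (sA-dA) \times (sB-dB)$ sending $\alpha + m\beta \mapsto (\alpha, \beta)$ is the candidate bijection; I would first check it is well-defined (the decomposition $\alpha + m\beta$ is unique given the size constraint on $m$), then surjective (trivially, by choosing representations coordinatewise), then injective.

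First I would set up coordinates. Write a general element of $sA - dA$ as a pair $(\alpha_1, \alpha_2) \in \mathbb{Z}^2$. Since $A \subset \mathbb{N}_0^2$ and each coordinate of an element of $A$ is at most $M := \max(\{a : (a,y) \text{ or } (x,a) \in A\})$, each coordinate of $\sum_{i=1}^s a_i$ lies in $[0, sM]$ and each coordinate of $\sum_{j=1}^d a'_j$ lies in $[0, dM]$, so each coordinate $\alpha_\ell$ of any element of $sA - dA$ satisfies $-dM \le \alpha_\ell \le sM$, hence $|\alpha_\ell| \le (s+d)M \le kM < m$. This is exactly the hypothesis on $m$. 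The point is that knowing $\alpha + m\beta$ as an element of $\mathbb{Z}^2$ determines $\alpha$ as the ``residue representative'' in $(-kM, kM)$ and $\beta$ as the quotient, so the decomposition is unique; this gives well-definedness of the map and simultaneously its injectivity, since two elements of $sC-dC$ with the same image would have the same $\alpha$ and same $\beta$, hence be equal as elements of $\mathbb{Z}^2$. Surjectivity is immediate: given $\alpha \in sA - dA$ realized by $a_i, a'_j$ and $\beta \in sB - dB$ realized by $b_i, b'_j$, the element $\sum (a_i + m b_i) - \sum (a'_j + m b'_j) \in sC - dC$ maps to $(\alpha, \beta)$.

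The main (and really the only) obstacle is handling the coordinatewise sign interaction carefully: because we are in $\mathbb{Z}^2$ and subtraction is involved, the ``digits'' $\alpha_\ell$ can be negative, so the decomposition $\alpha + m \beta$ is not literally a base-$m$ expansion with nonnegative digits but a balanced one. I would make this precise by noting that for integers $N$ and for any $t$ with $|t| < m$, the equation $N = t + m u$ has a unique solution $(t, u) \in \mathbb{Z}^2$ with $|t| < m$ — wait, that is not quite unique (consider $t = \pm m - \text{something}$), so more carefully: I restrict $t$ to the range $(-kM, kM)$, which has length $2kM < 2m$; this still need not force uniqueness in general, so the cleaner route is to observe that $t \in \{\alpha_\ell : \alpha \in sA - dA\} \subseteq [-dM, sM]$, an interval of length $(s+d)M \le kM < m$, and any two values in an interval of length less than $m$ are incongruent mod $m$ unless equal. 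Hence $t$ is the unique element of that specific interval congruent to $N$ mod $m$, and $u = (N - t)/m$ is then determined. Applying this in each coordinate recovers $(\alpha, \beta)$ from $\alpha + m\beta$, completing the argument. The final tally $|sC - dC| = |sA - dA|\,|sB - dB|$ then follows from the bijection.
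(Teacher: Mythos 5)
Your proof is correct and follows essentially the same route as the paper's: a unique ``base-$m$'' decomposition argument, where the hypothesis $m > k\cdot\max(\{a : (a,y) \text{ or } (x,a) \in A\})$ forces the $A$-part and the $m\cdot B$-part of an element of $sC-dC$ to be recoverable, giving the bijection with $(sA-dA)\times(sB-dB)$. If anything your write-up is more complete, since you verify uniqueness for general $s+d\le k$ by noting the ``digits'' lie in an interval of length $(s+d)M < m$, whereas the paper only spells out the cases $|C|$ and $|C\pm C|$.
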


\begin{proof}
By the definition of $C$, we have $|C|\leq |A||B|.$ We claim that each element of $C=A+mB$ can be written uniquely as some $(a_1, a_2) +m(b_1, b_2)$ for $(a_1, a_2) \in A$ and $(b_1, b_2) \in B$. If $(a_1, a_2) + m(b_1,b_2)=(a'_1, a'_2) + m(b'_1,b'_2)$ then we have $(a_1-a_1', a_2-a_2') = m(b_1'-b_1,b_2'-b_2)$. Since $m>k \cdot \max(\{a:(a,y) \text{ or } (x,a) \in A\})$, we must have $(a_1, a_2)=(a'_1, a'_2)$ and $(b_1, b_2)=(b'_1, b'_2).$ Therefore we know that $|C|=|A||B|.$

We know that $|C \pm C| = |A \pm A||B \pm B|$. We now claim each element of $C \pm C$ is uniquely written as $(\Tilde{a_1}, \Tilde{a_2}) \pm m(\Tilde{b_1}, \Tilde{b_2})$  where $(\Tilde{a_1}, \Tilde{a_2}) \in A \pm A$ and $(\Tilde{b_1}, \Tilde{b_2}) \in B \pm B$.  If $(\Tilde{a_1}, \Tilde{a_2}) \pm m(\Tilde{b_1}, \Tilde{b_2})=(\Tilde{a_1}', \Tilde{a_2}') \pm m(\Tilde{b_1}', \Tilde{b_2}'),$ then we have $(\Tilde{a_1}-\Tilde{a_1}', \Tilde{a_2}-\Tilde{a_2}') \mp m(\Tilde{b_1}'-\Tilde{b_1}, \Tilde{b_2}'-\Tilde{b_2}).$ Again, by the size of $m,$ this can only happen when $(\Tilde{a_1}, \Tilde{a_2})=(\Tilde{a_1}', \Tilde{a_2}')$ and when $(\Tilde{b_1}, \Tilde{b_2})=(\Tilde{b_1}', \Tilde{b_2}').$ Therefore, we know $|C\pm C| =|A \pm A||B \pm B|.$
\end{proof}

The next lemma is a further generalization.

\begin{lemma} \label{combining sets induction}
Fix a positive integer $k$. Say that $A_{1},\ldots,A_{k}\subset \mathbb{N}_0^2$. Choose some
$m>k\cdot \max(\{a:(a,y) \text{ or } (x,a) \in A_{i}\text{ for  }1\leq i \leq k \})$. Let $C=A_{1}+m\cdot A_{2}+\cdots+m^{k-1}\cdot A_{k}$ (where $m\cdot A_j$ is the usual scalar multiplication).
Then $\left|sC-dC\right|=\prod_{j=1}^{k}\left|sA_{j}-dA_{j}\right|$
whenever $s+d\leq k$.
\end{lemma}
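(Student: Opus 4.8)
The plan is to prove Lemma \ref{combining sets induction} by induction on $k$, bootstrapping off Lemma \ref{combining sets}, which is precisely the $k=2$ case (after renaming). First I would set up the induction: the base case $k=1$ is trivial since $C = A_1$ and the product is a single factor. For the inductive step, assume the statement holds for any collection of $k-1$ sets with an appropriately large base, and consider $A_1, \ldots, A_k$ with $m > k \cdot \max(\{a : (a,y)\text{ or }(x,a) \in A_i \text{ for } 1 \leq i \leq k\})$.

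The key idea is to regroup the base expansion as $C = A_1 + m \cdot D$, where $D = A_2 + m \cdot A_3 + \cdots + m^{k-2} \cdot A_k$. I would then want to apply Lemma \ref{combining sets} to the pair $(A_1, D)$ to get $|sC - dC| = |sA_1 - dA_1| \cdot |sD - dD|$ for $s + d \leq k$, and apply the inductive hypothesis to $D$ (a base-$m$ expansion of the $k-1$ sets $A_2, \ldots, A_k$) to get $|sD - dD| = \prod_{j=2}^k |sA_j - dA_j|$ for $s + d \leq k$; multiplying these gives the claim. The main obstacle is bookkeeping on the size of $m$: Lemma \ref{combining sets} requires the multiplier (here $m$) to exceed $k \cdot \max(\{a : (a,y)\text{ or }(x,a) \in A_1\})$, which is immediate from our hypothesis since $A_1$ is among the $A_i$. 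More delicate is checking that the inductive hypothesis genuinely applies to $D$: we need $m > (k-1) \cdot \max(\{a : (a,y)\text{ or }(x,a) \in A_i \text{ for } 2 \leq i \leq k\})$, which again follows since $k - 1 < k$ and the max over $\{2, \ldots, k\}$ is at most the max over $\{1, \ldots, k\}$.

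One subtlety I would be careful about: the inductive hypothesis as stated guarantees $|sD - dD| = \prod_{j=2}^k |sA_j - dA_j|$ only for $s + d \leq k-1$, but we need it for $s + d \leq k$. To handle this I would either (a) strengthen the inductive statement to say that the factorization $|sD-dD| = \prod |sA_j - dA_j|$ holds whenever $s+d \leq k$ provided $m$ is large relative to $k$ (not $k-1$) — which is exactly what our chosen $m$ guarantees, so the cleaner route is to phrase the induction with a uniform parameter $k$ bounding the number of copies in the sumset-difference throughout, rather than tying it to the number of sets — or (b) observe directly that the unique-representation argument in Lemma \ref{combining sets} only uses that $m$ exceeds $k$ times the coordinate-max of the \emph{inner} set, so the same proof shows $|sD - dD| = |sA_2 - dA_2| \cdot |s(A_3 + \cdots) - d(A_3 + \cdots)|$ for $s+d \leq k$, and iterate. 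Option (b) essentially re-derives the induction from scratch via repeated application of the uniqueness argument, so I would prefer to state the induction carefully with $k$ as the fixed ambient bound and $A_1, \ldots, A_\ell$ for $\ell \le k$ sets, $m > k \cdot \max$, concluding $|sC-dC| = \prod_{j=1}^\ell |sA_j - dA_j|$ for $s + d \le k$.

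To summarize the order of steps: (1) fix $k$, state the induction on the number $\ell$ of sets with the ambient bound $k$ held fixed throughout; (2) base case $\ell = 1$; (3) inductive step: write $C = A_1 + m \cdot D$ with $D$ the base-$m$ expansion of $A_2, \ldots, A_\ell$; (4) verify $m$ satisfies the hypotheses of both Lemma \ref{combining sets} (for the outer split, using $\max$ over $A_1$) and the inductive hypothesis (for $D$, using $\max$ over $A_2, \ldots, A_\ell$); (5) combine $|sC-dC| = |sA_1-dA_1|\cdot|sD-dD| = |sA_1 - dA_1| \prod_{j=2}^\ell |sA_j - dA_j| = \prod_{j=1}^\ell |sA_j - dA_j|$ for all $s+d \le k$; (6) specialize $\ell = k$ to recover the statement as written. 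The only real work is the bookkeeping in step (4); everything else is formal.
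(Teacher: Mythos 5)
Your proof is correct and takes the same route the paper intends --- the paper's entire proof of this lemma is the single sentence ``This can be proved using induction on the previous lemma.'' Your version is in fact more careful than the source: the quantifier mismatch you flag (a naive induction on $k$ only yields the factorization of $|sD-dD|$ for $s+d\le k-1$, not $s+d\le k$) is a genuine gap in the one-line argument, and your fix --- holding the ambient bound $k$ fixed while inducting on the number of sets $\ell\le k$, and noting that Lemma \ref{combining sets} only needs $m$ large relative to the coordinates of the \emph{inner} set $A_1$ --- is exactly the right way to make it rigorous.
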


\begin{proof}
This can be proved using induction on the previous lemma. 
\end{proof}

With these lemmas, we can now prove Theorem $3.1$ about the existence of chains of generalized MSTD sets. We first restate the theorem. \\

\noindent\emph{Theorem 3.1.}
Let $x_j, y_j, w_j, x_j$ be finite sequences of non-negative integers of length $k$ such that $x_j +y_j=w_j +y_j=j,$ and $\{x_j, y_j\} \neq \{w_j, z_j\}$ for every $2 \leq j \leq k.$ There exists a  $2$-dimensional set A that satisfies $|x_jA-y_jA| >|w_jA-x_jA|$ for every $2 \leq j \leq k.$

\begin{proof}

For each $i$, choose a set $A_{i}$ such that $\left|x_{i}A_{i}-y_{i}A_{i}\right|$ $>$ $\left|w_{i}A_{i}-z_{i}A_{i}\right|$,
and for $j \neq i$, $|x_{j}A_{i}$ $-$ $y_{j}A_{i}|$ $=$ $|w_{j}A_{i}$ $-$ $z_{j}A_{i}|$. We know such a set exists,
because of Theorem \ref{generalized MSTD} and Lemma \ref{same size sets}. Next, choose some $m>k\cdot\max(\{a:(a,y) \text{ or } (x,a) \in A_{k}\text{ for }1\leq i \leq k \})$.
Define $A=A_{1}+mA_{2}+m^{2}A_{3}+\cdots+m^{k-1}A_{k}$. We have that
for each $2\leq j\leq k$

\begin{align}
\left| x_{j}A-y_{j}A\right| &\ = \ \prod_{i=1}^{k}\left|x_{j}A_{i}-y_{j}A_{i} \right| \nonumber \\
 &\ = \ \left|x_{j}A_{j}-y_{j}A_{j}\right|\cdot\prod_{i\neq j}\left|x_{j}A_{i}-y_{j}A_{i}\right| \nonumber \\
 &\ = \ \left|x_{j}A_{j}-y_{j}A_{j}\right|\cdot\prod_{i\neq j}\left|w_{j}A_{i}-z_{j}A_{i}\right| \nonumber \\
 &\ > \ \left|w_{j}A_{j}-z_{j}A_{j}\right|\cdot\prod_{i\neq j}\left|w_{j}A_{i}-z_{j}A_{i}\right| \nonumber \\
 &\ = \ \left|w_{j}A-z_{j}A\right|.
 \end{align}
 
\end{proof}

A corollary on the existence of $k$-generational sets immediately follows from this theorem.

\begin{corollary} \label{k generational}
For each $k \in \mathbb{N}$ there exists a $k$-generational set. That is, for each $k,$ there exists a set $A$ such that $|cA+cA|>|cA-cA|$ for all $1 \leq c \leq k.$ 
\end{corollary}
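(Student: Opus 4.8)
The plan is to derive this as a direct specialization of Theorem \ref{chain of gen MSTD}. A $k$-generational set is by definition a set $A$ with $|cA+cA| > |cA-cA|$ for every $1 \le c \le k$; equivalently, writing $j = 2c$, we want $|x_j A - y_j A| > |w_j A - z_j A|$ where for each even $j$ in the range $2 \le j \le 2k$ we set $x_j = y_j = j/2$ (so $x_j A - y_j A = cA - cA$... wait) — more precisely, I would choose the four sequences so that the ``first'' family records the sumset $cA+cA = (2c)A - 0\cdot A$ and the ``second'' family records the difference set. So for each index $j$ with $1 \le j \le 2k$, set $x_j = j$, $y_j = 0$, and choose $w_j, z_j$ with $w_j + z_j = j$ and $\{w_j,z_j\} \ne \{j,0\}$: concretely take $w_j = \lceil j/2 \rceil$, $z_j = \lfloor j/2 \rfloor$. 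Then for each even $j = 2c$ the inequality $|x_j A - y_j A| > |w_j A - z_j A|$ reads $|(2c)A| = |cA + cA|$ versus $|cA - cA|$, which is exactly the sum-dominance condition at generation $c$.

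First I would verify that this choice of sequences satisfies the hypotheses of Theorem \ref{chain of gen MSTD}: we need $x_j + y_j = w_j + z_j = j$ (immediate from the definitions) and $\{x_j,y_j\} \ne \{w_j,z_j\}$ for $2 \le j \le 2k$ (this holds since $\{j,0\} = \{\lceil j/2\rceil, \lfloor j/2\rfloor\}$ would force $j = 0$, impossible in our range). Then Theorem \ref{chain of gen MSTD}, applied with upper index $2k$ in place of $k$, produces a $2$-dimensional set $A$ with $|x_j A - y_j A| > |w_j A - z_j A|$ for all $2 \le j \le 2k$; restricting attention to the even indices $j = 2, 4, \ldots, 2k$ gives $|cA + cA| > |cA - cA|$ for all $1 \le c \le k$, which is precisely the assertion. (One should note $|x_j A - y_j A| = |jA - 0\cdot A| = |jA|$ and that the paper's notation $cA + cA$ denotes $(2c)A$, so the bookkeeping matches.)

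I do not expect a genuine obstacle here — the corollary is essentially a relabeling of the theorem. The one point requiring a sentence of care is making sure the combining construction in Theorem \ref{chain of gen MSTD} is invoked with the correct bound on the number of generations: since we now care about $j$ up to $2k$ rather than $k$, all the applications of Lemma \ref{combining sets} and Lemma \ref{combining sets induction} must be run with their fixed parameter set to $2k$, so that $sC - dC$ factors correctly whenever $s + d \le 2k$. Once that substitution is made, the odd-indexed inequalities are harmless extra information and the even-indexed ones deliver the result.
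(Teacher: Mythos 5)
Your proposal is correct and is exactly the route the paper intends: the corollary is obtained by applying Theorem \ref{chain of gen MSTD} with the sequences $x_j=j$, $y_j=0$, $w_j=\lceil j/2\rceil$, $z_j=\lfloor j/2\rfloor$ up to index $2k$ and reading off the even-indexed inequalities $|(2c)A|=|cA+cA|>|cA-cA|$. Your remark about running Lemmas \ref{combining sets} and \ref{combining sets induction} with the parameter set to $2k$ is a sensible bookkeeping check, but there is no substantive difference from the paper's (unstated, ``immediate'') argument.
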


% \begin{remark}
% In future work, these proofs could be extended in two directions. First, work on $1$-dimensions \cite{ILMZ} has shown positive percentages for generalized MSTD sets, chains of generalized MSTD sets, and $k$-generational sets using the methods of \cite{MO}. Second, these proofs could be generalized to prove analogous results in dimensions greater than $2$.
% \end{remark}

This result begs the question: Are there any sets such that $|kA+kA|>|kA-kA|$ for all  $k \in \mathbb{N}$? We begin  to answer this question by proving  that sets with certain properties cannot have  $|kA+kA|>|kA-kA|$ for all  $k \in \mathbb{N}$.

We begin with a $1$-dimensional theorem proved by Nathanson \cite{Na1} that is used in our $2$-dimensional proof. 
\begin{theorem}[Nathanson]\label{Nathanson}
Let $A=\{a_0, a_1, \ldots, a_k\}$ be a finite set of integers with $a_0=0<a_1 < \cdots < a_m=a$ and $(a_1, a_2, \ldots, a_m)=1.$ Then there exists non-negative integers $c$ and $d$ and sets $C \subset [0, c-2]$ and $D \subset [0,d-2]$ such that for all $k \geq a^2m$ $$kA \ = \ C\cup [c, ka-d] \cup (ka-D).$$
\end{theorem}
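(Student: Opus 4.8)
The plan is to reduce the statement to a fact about the numerical semigroup $S := \langle a_1,\dots,a_m\rangle = \{c_1a_1+\cdots+c_ma_m : c_i\in\mathbb{N}_0\}$, which contains $0$ since $a_0=0\in A$. First I would record two elementary containments. On one hand every element of $kA$ is a sum of $k$ elements of $\{0,a_1,\dots,a_m\}$, so $kA\subseteq S$. On the other hand, if $n\in S$ satisfies $n\le ka_1$, then writing $n=c_1a_1+\cdots+c_ma_m$ gives $c_1+\cdots+c_m\le n/a_1\le k$, so padding with copies of $a_0=0$ exhibits $n$ as a sum of exactly $k$ elements of $A$; hence $S\cap[0,ka_1]\subseteq kA$. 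Since $\gcd(a_1,\dots,a_m)=1$, the semigroup $S$ is cofinite (a standard consequence of Bézout's identity), so it has a conductor $c$, the least integer with $[c,\infty)\subseteq S$. One checks $c\ne 1$, so either $c=0$ or $c-1\notin S$; in both cases $C:=S\cap[0,c-1]=S\cap[0,c-2]\subseteq[0,c-2]$ as required, and the two containments above force $kA\cap[0,c-1]=C$ as soon as $ka_1\ge c-1$.

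For the top of the range I would apply the same analysis to the reflected set $A' := a-A = \{0=a-a_m<\cdots<a-a_0=a\}$. The nonzero elements of $A'$ are $a-a_{m-1},\dots,a-a_0=a$; their gcd divides $a$ and divides each $a-(a-a_i)=a_i$, hence equals $\gcd(a_1,\dots,a_m)=1$, so $A'$ satisfies the hypotheses, with semigroup $S'=\langle a-a_{m-1},\dots,a-a_0\rangle$, conductor $d$, and fringe $D:=S'\cap[0,d-2]\subseteq[0,d-2]$. Since $kA'=k(a-A)=ka-kA$, the bottom-fringe statement for $kA'$ translates: for $k$ large, $kA\cap[ka-(d-1),\,ka]=ka-D$, and $kA$ misses the point $ka-(d-1)$.

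The substantive step is the middle, showing $[c,\,ka-d]\subseteq kA$ for all large $k$. Two ingredients combine. First, $kA$ always contains the bottom interval $[c,\,ka_1]$ (by the padding containment, since $[c,ka_1]\subseteq S$), and, once $k(a-a_{m-1})\ge d$, the top interval $[ka_{m-1},\,ka-d]$ (the padding containment for $A'$). Second, a growing mechanism: if $kA$ contains an interval $[u,v]$ with $v-u\ge a-2$ — so $v-u+1$ is at least every consecutive gap $a_{i+1}-a_i$ — then $(k+1)A=kA+A\supseteq[u,v]+\{a_0,\dots,a_m\}$ is again an interval, namely $[u,\,v+a]$, because the shifted copies overlap. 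Starting from $[c,\,k_1a_1]$ at the first $k_1\approx(c+a)/a_1$ for which it has length $\ge a-2$, and iterating the growing step $t$ times, one gets $[c,\,k_1a_1+ta]\subseteq(k_1+t)A$; once $t\gtrsim k_1(a_{m-1}-a_1)/(a-a_{m-1})$ this grown bottom interval overlaps the top interval $[ka_{m-1},\,ka-d]$, and their union is exactly $[c,\,ka-d]$. (For $m=2$ the two intervals already abut with no growing; the case $m=1$ forces $a=1$, $kA=[0,k]$, and is handled separately.)

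I expect the main obstacle to be the quantitative bookkeeping needed to arrive at the clean threshold $k\ge a^2m$: every cutoff above — $ka_1\ge c-1$, $k(a-a_{m-1})\ge d$, and the growing-argument bound — is a polynomial in $a$, $m$ and the conductors $c,d$, so one needs an explicit upper bound on $c$ and $d$ in terms of $a$ and $m$ (a classical estimate for numerical semigroups with generators bounded by $a$, cf. \cite{Na1}) and then a verification that $k\ge a^2m$ dominates all of them. Granting this, assembling the three regions gives $kA=C\cup[c,\,ka-d]\cup(ka-D)$ with $C\subseteq[0,c-2]$ and $D\subseteq[0,d-2]$, which is the claim.
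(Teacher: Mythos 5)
First, a point of comparison: the paper does not prove this statement at all --- it is quoted as a classical theorem of Nathanson and cited to \cite{Na1}, so there is no in-paper proof to measure your argument against. Judged on its own terms, your proposal is a correct outline of the standard qualitative argument. The two containments $kA\subseteq S$ and $S\cap[0,ka_1]\subseteq kA$ are right, the identification of $C$ with $S\cap[0,c-2]$ below the conductor is right (including the observation that $c\neq 1$), the reflection $A'=a-A$ does satisfy the hypotheses with $kA'=ka-kA$, and the interval-growing mechanism ($[u,v]+A=[u,v+a]$ once $v-u+1$ exceeds every consecutive gap, with $m=1$ handled separately) is sound. Assembled, this proves that the decomposition $kA=C\cup[c,ka-d]\cup(ka-D)$ holds for all sufficiently large $k$.

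The genuine gap is the one you flag yourself, and it is not mere bookkeeping: the theorem asserts the explicit threshold $k\geq a^2m$, and the paper actually uses that explicit value (the cutoff $N=\max\{2a'^2,2b'^2\}$ in Lemma \ref{growth of kA} is exactly $a^2m$ with $m=2$), so the quantitative claim cannot be waved away. Worse, the growing argument as you have set it up --- extend the bottom interval $[c,k_1a_1]$ upward at rate $a$ until it meets the \emph{moving} top interval $[ka_{m-1},ka-d]$ --- need not reach $a^2m$. Your own estimate $t\gtrsim k_1(a_{m-1}-a_1)/(a-a_{m-1})$ blows up when $a-a_{m-1}=1$ and $a_1$ is small: with $c$ of order $a^2/m$ one gets $k$ of order $a^3/(m a_1)$, which exceeds $a^2m$ for small $m$. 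The fix is either to also grow the top interval downward via $A'$ (whose left endpoint then \emph{freezes} at $k_1'a_{m-1}$ rather than advancing at rate $a_{m-1}$, so the two intervals meet after roughly $k_1+k_1'$ steps), together with an explicit Frobenius-type bound on the conductors $c,d$ in terms of $a$ and $m$; or to follow Nathanson's original route, which bypasses the abstract conductor entirely and represents each $n$ in the middle range directly using a B\'ezout relation $\sum x_ia_i=1$, from which the $a^2m$ threshold falls out. As written, your proof establishes the structure theorem for $k$ sufficiently large but not the stated bound.
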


%\begin{lemma}
%Let $A'=\{0, a, a'\}$ where $a< a'$ and $a$ and $a'$ are coprime. For $k \geq aa'$ and some nonnegative constant $C$ we have $|kA|=ka'- C$.
%\end{lemma}

%\begin{proof}

%We first consider $a'A$, in particular we consider the set $L=a'A\cap[0,aa']$. We show that $kA\cap[0,aa']=L$ for $k\geq a'$. This is because $a$ is the smallest element in the set $A$, so elements that are less than $aa'$ can be written as $\sum_{i=1}^m\epsilon_ia_i$ where $\sum_i^m\epsilon_i\le a_m$. We call $L$ the stabilized left fringe of $A$.

%We can apply the same idea to the set $a'-A$ and show that the right fringe $R=kA\cap[(k-a)a',ka']$ is also stabilized

%We then show that in $aA$ (which is the sum of $a$ copies of $A$) there are elements of each congruence class of $a$. Consider $\alpha=a' \mod a$, $a$ and $a'$ are coprime, then $\alpha$ generates $[0,a]$ in $a$ sums. This proves the claim.

 %Now we just need to show that for $k\ge aa'$ we have $kA\backslash(L\cup R)$ is completely filled. This can be shown by induction. With all the congruence classes of $a$, by brute force we can show that the middle part of $aa'A$ is completely filled. This serves as the base case of the induction. If $kA\backslash(L\cup R)$ is completely filled then $kA$ contains the interval $[a_1a_m,(k-a_m+a_{m-1})a_m]$. If we add $a_m$ to this interval we will get the interval $[(k-a_m+a_{m-1})a_m,(k+1-a_m+a_{m-1})a_m]$. So in $(k+1)A$, we will have a completely filled middle $[a_1a_m,(k+1-a_m+a_{m-1})a_m]$. This completes the proof.
%\end{proof}
We then use this theorem to prove a lemma stating that for certain $2$-dimensional sets, the amount of elements missing from $|kA|$ grows linearly. 

\begin{remark}
\cite{ILMZ} improved Theorem \ref{Nathanson} with a lower bound on $k$. However, we discovered their proof doesn't hold in total generality, a point we correct in Theorem \ref{correction}. Our end conclusion, that a set $A$ with specific properties cannot have $|kA+kA|>|kA-kA|$ for all  $k \in \mathbb{N}$ does not necessitate the lowest possible bound on $k$. Thus, as we know the classical result by Nathanson to be true, we choose to rely on it, instead of the similar theorem from \cite{ILMZ}.
\end{remark}

\begin{figure}
\includegraphics[scale=.8]{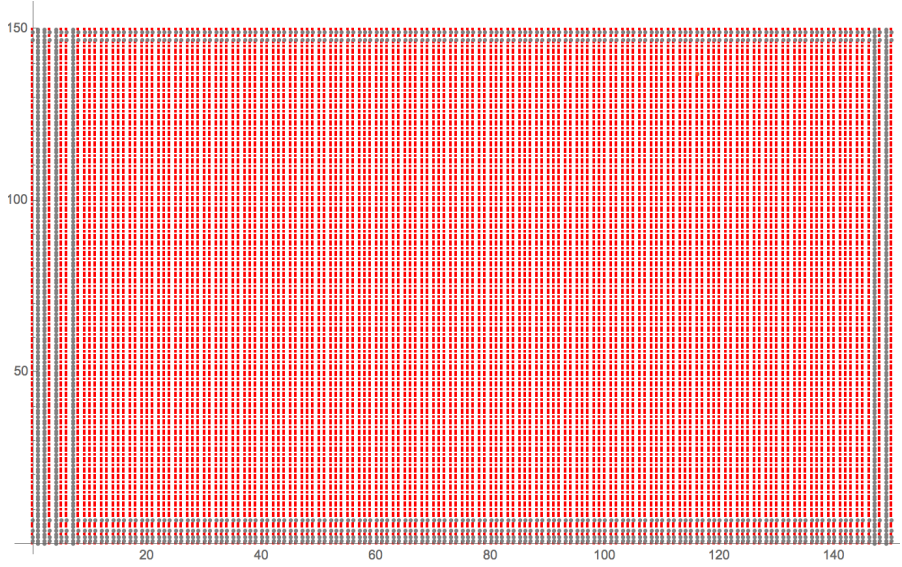}
\caption{$kA$ where $A=\{(0,0), (3,0), (0,3), (5,0), (0,5), (3,3), (5,3), (3,5), (5,5)\}$ and $k=30$}
\centering
\label{fig:stablized  set}
\end{figure}

\begin{lemma} \label{growth of kA}
Let $A=\{(a_1, b_1), (a_2, b_2), \ldots, (a_m, b_m)\}$ where $a_i, b_i$ are non-negative integers. Let $a$ be the smallest non-zero $a_i$, $a'$ be the largest $a_i$, $b$ be the smallest non-zero $b_i$ and $b'$ be the largest $b_i$, and $N=\max\{2a'^2, 2b'^2\}$. If $a$ and $a'$ are coprime,  $b$ and $b'$ are coprime, and 
$\{(0,0), (a,0), (0,b), (a',0),$ $(0,b'),(a,b),(a,b'), (a',b), (a',b') \} \subset A,$ then for $k \geq N$  and for some constants $\ell, \ell_1, \ell_2$, we have $|kA|=k^2a'b' -\ell-\ell_1a'k -\ell_2b'k$. 
\end{lemma}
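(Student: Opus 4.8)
The plan is to reduce the two-dimensional count $|kA|$ to one-dimensional data along the $x$-axis and $y$-axis by applying Nathanson's theorem (Theorem \ref{Nathanson}) separately in each coordinate, then to assemble the answer. First I would observe that, because $\{(0,0),(a,0),(a',0)\}\subset A$ with $\gcd(a,a')=1$, the projection of $A$ onto the first coordinate contains a set $A_x$ satisfying the hypotheses of Nathanson's theorem with largest element $a'$; similarly the projection onto the second coordinate contains $A_y$ with largest element $b'$. Hence for $k\ge N=\max\{2a'^2,2b'^2\}$ (which dominates both $a'^2\cdot(\text{number of generators})$-type thresholds since the relevant multiplicities are at most $a'$ and $b'$ respectively), the sets $kA_x$ and $kA_y$ have the stabilized form $C_x\cup[c_x,ka'-d_x]\cup(ka'-D_x)$ and $C_y\cup[c_y,kb'-d_y]\cup(kb'-D_y)$, i.e. they equal full intervals $[0,ka']$ and $[0,kb']$ except for a bounded set of missing points near each end, the number of which is independent of $k$.

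Next I would show that $kA$ is, up to a bounded deficiency near the four sides of the bounding rectangle $[0,ka']\times[0,kb']$, the full rectangle. The key structural input is that $A$ contains the nine ``corner/edge'' points $\{(0,0),(a,0),(0,b),(a',0),(0,b'),(a,b),(a,b'),(a',b),(a',b')\}$, which play the role of a two-dimensional fringe: the points $(a,b),(a',b),(a,b'),(a',b')$ together with $\gcd(a,a')=\gcd(b,b')=1$ let one generate, for $k$ large, every lattice point $(x,y)$ with both $x$ bounded away from $\{0,ka'\}$ and $y$ bounded away from $\{0,kb'\}$ — one writes $x$ using $k_1$ copies contributing to the $x$-coordinate and $y$ using the remaining $k-k_1$ copies, using the edge points $(a,b)$-type to move diagonally and $(a',b)$, $(a,b')$ to adjust. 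The missing points of $kA$ therefore all lie within a bounded horizontal strip near $y=0$, a bounded strip near $y=kb'$, a bounded vertical strip near $x=0$, or one near $x=ka'$; within, say, the strip near $y=0$ the missing pattern is governed by what $x$-coordinates are attainable when few copies are ``spent'' on raising $y$, which by the one-dimensional Nathanson analysis on $A_x$ is a fixed (in $k$) set of columns, each contributing a $O(1)$ number of missing points but with a defect proportional to $k$ only through the length $ka'$ of that strip's extent — giving the $\ell_1 a' k$ and $\ell_2 b' k$ terms — plus a genuinely bounded correction $\ell$ from the four corners where two strips overlap. Inclusion–exclusion on the four strips then yields $|kA| = (ka'+1)(kb'+1) - (\text{defect}) = k^2 a'b' - \ell_1 a' k - \ell_2 b' k - \ell$ after absorbing the linear-in-$k$ pieces $ka'+kb'$ into the constants $\ell_1,\ell_2$ and collecting the $O(1)$ remainder into $\ell$.

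The main obstacle I anticipate is making rigorous the claim that the deficiency is \emph{exactly} of the form $\ell + \ell_1 a' k + \ell_2 b' k$ with $\ell_1,\ell_2,\ell$ independent of $k$ once $k\ge N$ — that is, proving the missing set truly ``stabilizes'' in each of the four boundary strips and that the corner overlaps are counted consistently. Concretely, one must verify (i) that a point $(x,y)$ with $\min(x, ka'-x)$ large and $\min(y,kb'-y)$ large always lies in $kA$, which requires a careful Nathanson-style argument in two variables using the edge points to transfer representability between coordinates; and (ii) that near a side, say $y$ small, the set of attainable $x$ for each fixed small $y$ is eventually constant in $k$ and its complement within $[0,ka']$ has the same cardinality for all large $k$ — this is where one again invokes Theorem \ref{Nathanson}, now applied to the ``shifted'' one-dimensional sets obtained by fixing how many generators are used to build up the $y$-coordinate. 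Handling the interaction at the four corners (where a horizontal and a vertical strip meet) carefully, so as not to double-count missing points, is the delicate bookkeeping step; I would treat it by a clean inclusion–exclusion over the four strips, with the corner terms being the bounded constant $\ell$.
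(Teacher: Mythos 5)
Your proposal is correct and follows essentially the same route as the paper: apply Nathanson's theorem to the coordinate projections $\{0,a,a'\}$ and $\{0,b,b'\}$, fill in the middle rectangle by merging the two one-dimensional representations via the diagonal points $(a,b),(a,b'),(a',b),(a',b')$ so that $(x,y)$ needs at most $k$ summands, and then account for the deficiency as a stable pattern of entirely missing columns/rows (the linear terms) plus a bounded corner correction (the constant $\ell$). The bookkeeping you describe via inclusion--exclusion over the four boundary strips is the same classification the paper carries out by sorting points according to whether each coordinate lies in $C_1\cup D_1$, $C_2\cup D_2$, or the stabilized middle intervals.
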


\begin{remark}
Figure \ref{fig:stablized  set} demonstrates what these sets look like for $k \geq N$.
\end{remark}

\begin{proof}
 First, we use Nathanson's theorem to say that for $\mathcal{A}=\{0,a,a'\}$, there exists integers $c_1$ and $d_1$ and sets $C_1 \subset [0,c_1-2]$ and $D_1\subset [0,d_1-2]$ such that for $k \geq 2a'^2$, we have $k\mathcal{A}= C_1 \cup [c_1, ka'-d_1] \cup ka'-D_1.$ Additionally, for $B=\{0,b,b'\}$, we know there exists integers $c_2$ and $d_2$ and sets $C_2 \subset [0,c_2-2]$ and $D_1\subset [0,d_2-2]$ such that for $k \geq 2b'^2$, we have $kB= C_2 \cup [c_2, kb'-d_2] \cup (kb'-D_2).$

For this proof, we use the concept of fringes and a middle, however, they look different than in previous sections. The fringes are a frame around the set and and the middle is a rectangle in middle of the set. More specifically, we define the fringes to be 
\begin{equation}F \ = \ kA  \setminus M= kA \cap [\{ (x,y) : x \in C_1 \cup D_1\} \cup \{ (x,y) : y \in C_2 \cup D_2\}], \end{equation}

and we define the middle of $kA$ as 

\begin{equation}M \ = \ kA \cap \left[ \{(x,y) : x \in [c_1, ka'-d_1]\}] \cup [ \{(x,y) : y \in [c_2, kb'-d_2]\} \right].\end{equation} 

Let $N=\max\{2a'^2, 2b'^2\}.$ We first claim that $M$ is completely filled-in for $k \geq N$.  We rely on Nathanson's proof that showed for $A'=\{0, a, a'\}$ and for $k \geq 2a'^2$, the middle interval of $kA'$, $[c_1, ka'-d_1]$, is completely filled.  Thus as $\{(0,0), (a,0), (a',0)\} \subset A$, we know that $\{(x, 0): x \in [c_1, ka'-d_1] \} \subset k_aA$ where $k_a \geq 2a'^2$.  We can use the same argument to show that  $\{(0,y): y \in [c_2, kb'-d_2] \} \subset k_bA$ where $k_b \geq 2b'^2$. Let $k \geq N$.
Thus we know that for $x \in [c_1, ka'-d_1]$, $(x,0) \in kA$ and for  $y \in [c_2, kb'-d_2]$, $(0,y) \in kA$. We then examine $(x,y)=(x,0)+(0,y)$ where $x \in [c_1, ka'-d_1]$ and $y \in [c_2, kb'-d_2]$. We know that $(x,0)$ and $(0,y)$ can be made in most $k$ sums. As $\{(a,b),(a,b'),(a',b), (a'b')\} \subset A$, we know we can replace the elements $(a', 0) $, $(a, 0)$  and $(0, b')$, $(0, b)$ which are summed to create $(x,0)$ and $(0,y)$, respectively with the elements of $\{(a,b')$, $(a',b)$, $(a',b')$, $(a,b)\}$. Using this substitution, it is clear that the number of elements of $A$ needed to sum to create $(x,y)$ is at most $k.$ 
Thus, for $k\geq N$, the middle of $kA$ is not missing any elements. 

We then examine the fringes of $kA$ to show that they are always missing a set number of points, $C$, and an amount of points that linearly depends on $k$, $\ell_1ka'+ \ell_2kb'$. Suppose $(x, 0) \in kA$ for $k \geq N$ where $x \leq c_1-2$ or $x \geq ka'-(d_1-2)$. By Nathanson's theorem, we know that if $x \in (2a'^2)A$, then $x\in kA$ for $k > 2a'^2$. Similarly, if  $x \notin (2a'^2)A$, then $x\notin kA$ for $k > 2a'^2$. Thus $(x,0) \in kA$ for $k > 2a'^2$ if and only if $(x, 0) \in (2a'^2)A$. The same argument can be applied to $(0,y)$ for $y \leq c_2-2$ or $y \geq kb'-(d_2-2)$. We note that if $x \in (2a'^2)A$, for $x \leq c_1-2$ or $x \geq ka'-(d_1-2)$, then $x \in C_1\cup D_1$. Similarly,  if $y \in (2y'^2)A$ for $y \leq c_2-2$ or $y \geq kb'-(d_2-2)$, then $y \in C_2\cup D_2$.

%We then note this element can be written as $\sum_{(a_i, 0) \in A} \epsilon_i(a_i, 0)$ where $\sum_{(a_i, 0) \in A} \epsilon_i \leq a'$ and $\epsilon_i \in \mathbb{N} \cup \{0\}.$ Thus if $(x, 0) \in a'A$, then as $(0,0) \in A$, we know $(x, 0) \in k_aA$ where $k_a \geq a'$. If $(x, 0) \notin a'A$ where $0 \leq x \leq aa'$, then $(x,0) \notin k_aA$ where $k_a \geq a'$.
%We can apply the same idea to the set $ka'-kA$  to see that if  $(x,0)\in kA$ with $x \geq (k-a)a'$, then the amount of these elements missing in $k_aA$  remains constant for  $k_a \geq a'$. We can then apply this argument to show that the amounts of elements missing of the form $(0, y)$ where $0 \leq y \leq bb'$ or $(k-b)b' \leq y \leq kbb'$ becomes constant by by $k_b=b'$. 

We then have that for $(x,y)$, if  $x \in C_1 \cup D_1$ and $y \in C_2 \cup D_2$ then for $k \geq N$,  $(x,y)$ is always in $kA$ or never in $kA$. Let the total number of these missing elements be $\ell'$. If we have $x' \in C_1 \cup D_2$ and $y \notin C_2 \cup D_2$, then $x'$ is either always or never present and $y$ is always present as it is in the middle. If $x'$ is never present, then we are missing a column defined by $\{(x',y) : y \in [c_2,kb'-d_2]\}$. If $x'$ is present, then that column is in $kA.$ The length of  of this column is $kb'-d_2-c_2$. We can examine  $x \notin C_1 \cup D_1$ and $y \in C_2 \cup D_2$ to find that there may also be missing rows. The length of these rows is  $ka'-d_1-c_1.$ Let $\ell_1$ be the total number of missing columns and $\ell_2$ the missing rows. Then let $\ell=\ell' - \ell_1(d_1+c_1) -\ell_2(d_2+c_2)$. Then we note that the number of missing elements in $kA$ for $k \geq N$ is  $\ell +\ell_1ka' +\ell_2kb'.$
\end{proof}

% \begin{remark}
% Consider slightly different conditions:
% Let $A=\{(a_1, b_1), (a_2, b_2), \ldots, (a_m, b_m)\}$. Let $a$ be the smallest non-zero $a_i$, $a'$ be the largest $a_i$, $b$ be the smallest non-zero $b_i$ and $b'$ be the largest $b_i$. Suppose $a$ and $a'$ are coprime,  $b$ and $b'$ are coprime, and $\{(0,0), (a,0), (0,b),$ $(a',0), (0,b'), (a',b') \} \subset A.$ For this set of conditions where fewer elements are required to be in $A$, the growth of $|kA|$ is still linear, but the set is more complicated. Instead of having rows and columns missing in the fringes, the right and top parts of the fringes have a repeated jagged pattern. The width of the right fringe and the height of the top fringe remain constant as $k$ grows. Thus, we believe it would be possible to come up with a similar lemma to the one above with fewer conditions. In fact, we think that it would be possible to prove that for sufficiently large $k$, for any 2-dimensional set, the amount of the points missing from the sum set has linear growth.
% \end{remark}

We then use the previous lemma to prove a result about $k$-generational sets. 

\begin{lemma} \label{no k generational}
Let $A=\{(a_1, b_1), (a_2, b_2), \ldots, (a_m, b_m)\}$ where $a_i, b_i$ are non-negative integers. Let $a$ be the smallest non-zero $a_i$, $a'$ be the largest $a_i$, $b$ be the smallest non-zero $b_i$ and $b'$ be the largest $b_i$ and $N=\max\{2a'^2, 2b'^2\}$. If $a$ and $a'$ are coprime,  $b$ and $b'$ are coprime, and $\{(0,0), (a,0), (0,b), (a',0), (0,b'),(a,b'), (a',b),$ $(a',b'), (a,b) \} \subset A,$ then for $k \geq N$  we have $|kA-kA| \geq |kA+kA|$.

\begin{proof}
Let $k \geq N.$ We then know that $|kA|=k^2a'b' -\ell-\ell_1a'k -\ell_2b'k$, or in other words, $kA$ has a filled-in middle and a total of $\ell_1$ missing columns on left and right sides of the set and a total of $\ell_2$ missing rows on top and bottom edges of the set. We know  both $2kA$ and $kA-kA$  are  subsets of an integer lattice with $4k^2a'b'$ points and we note that $|2kA|=4k^2a'b' -\ell-\ell_1(2a'k) -\ell_2(2b'k)$. We then examine how many points $kA-kA$ is missing.  $kA$ is the union of a rectangular filled-in middle and filled-in rows and columns where rows and columns overlap in the corners of the set. The rectangular middle, the rows, and the columns are all preserved under subtraction. Thus $kA-kA$ has at least $4k^2a'b' -\ell-\ell_1(2a'k) -\ell_2(2b'k)$ points. However, under subtraction,  points from different rows and columns and the middle may interact to add new points to the set $kA-kA$. Therefore  $|kA-kA| \geq 4k^2a'b' -\ell-\ell_1(2a'k) -\ell_2(2b'k)$. We conclude that $|kA-kA| \geq |kA+kA|.$
\end{proof}

\end{lemma}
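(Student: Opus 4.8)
The plan is to build directly on Lemma~\ref{growth of kA}, which tells us that for $k\geq N$ the set $kA$ is, up to a linearly-bounded number of missing rows, columns, and sporadic points, a filled-in rectangle. More precisely, write $kA = R_k \cup (\text{columns}) \cup (\text{rows})$ where $R_k$ is the filled-in middle rectangle $[c_1,ka'-d_1]\times[c_2,kb'-d_2]$, the columns are sets of the form $\{x'\}\times[c_2,kb'-d_2]$ for each surviving fringe $x$-coordinate $x'$, and the rows are sets of the form $[c_1,ka'-d_1]\times\{y'\}$. The key observation is that each of these three types of pieces is individually symmetric under negation-and-translation: a filled-in axis-parallel rectangle $[\alpha,\beta]\times[\gamma,\delta]$ satisfies $|X-X|=|X+X|$ (both are the rectangle of doubled side lengths), and likewise for a single row or column, which is a one-dimensional interval. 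So when we form $kA+kA$ we are taking the sumset of a union of such pieces, and when we form $kA-kA$ the difference set; piece-by-piece the two operations produce translates of the same-size sets.

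The main step is therefore to produce a lower bound $|kA-kA|\geq |kA+kA|$ by exhibiting, inside $kA-kA$, a copy of each ``self-contained'' contribution that appears in $kA+kA$. First I would compute $|kA+kA|=|2kA|$ exactly from Lemma~\ref{growth of kA} applied with $k$ replaced by $2k$: since $2k\geq N$ as well, $|2kA| = 4k^2a'b' - \ell - 2\ell_1 a' k - 2\ell_2 b' k$, using that the combinatorial constants $\ell,\ell_1,\ell_2$ depend only on the fixed fringe structure (they are determined by which elements of $(2a'^2)A$ and $(2b'^2)A$ survive, which is independent of $k$ once $k\geq N$). Then I would argue that $kA-kA$ contains: (i) the full doubled middle rectangle $R_k - R_k = [c_1 - ka' + d_1,\, ka' - d_1 - c_1]\times[\cdots]$, which has the same cardinality as $R_k + R_k$; (ii) for each surviving column at $x$-coordinate $x'$, the translate $(x' \;-\; [\text{the left end of the opposite column}])$ paired with the doubled interval in the $y$-direction, giving a set of the same size as the corresponding column-plus-column contribution in the sumset; (iii) similarly for rows. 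Because all of these pieces of $kA-kA$ sit in disjoint regions of the plane (the middle block, the far-left/far-right column strips, the top/bottom row strips are separated once $n$, equivalently $k$, is large — this is exactly the fringe/middle separation already used throughout the paper), no overcounting occurs, and we get $|kA-kA|\geq 4k^2a'b' - \ell - 2\ell_1 a' k - 2\ell_2 b' k = |kA+kA|$.

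Concretely, the steps in order: (1) invoke Lemma~\ref{growth of kA} for both $k$ and $2k$ to pin down $|kA+kA|$; (2) decompose $kA$ into middle $\cup$ columns $\cup$ rows and note each is an interval or box; (3) check the elementary fact that $|I-I|=|I+I|$ for an interval $I$ and $|B-B|=|B+B|$ for an axis-parallel box $B$; (4) check that the corresponding pieces of $kA\pm kA$ occupy disjoint regions, so cardinalities add; (5) assemble the inequality and conclude $|kA-kA|\geq|kA+kA|$, i.e.\ $A$ is not sum-dominant at level $k$ for any $k\geq N$, so it cannot be $k$-generational for all $k$. I expect step (4), the disjointness bookkeeping, to be the main obstacle: one must be careful that a difference coming from ``column minus middle'' or ``row minus column'' does not accidentally land in the region we reserved for ``middle minus middle,'' and one must make sure the lower bound counts each reserved region only once. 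This is where the hypothesis $k\geq N=\max\{2a'^2,2b'^2\}$ does real work, since it guarantees the middle rectangle is genuinely long in both directions ($ka'-d_1-c_1$ and $kb'-d_2-c_2$ are large and positive), so the doubled middle and the shifted fringe strips are well-separated; making that quantitative is the one place where a short explicit estimate is needed rather than a soft argument.
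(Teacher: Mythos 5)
Your proposal follows essentially the same route as the paper's proof: decompose $kA$ (via Lemma \ref{growth of kA}) into a filled middle rectangle plus surviving full rows and columns, apply the lemma at $2k$ to evaluate $|kA+kA|=|2kA|=4k^2a'b'-\ell-2\ell_1a'k-2\ell_2b'k$, observe that each piece is an interval or box and hence contributes a translate of the same size to the difference set, and conclude $|kA-kA|\geq|2kA|$. The one point you flag as the main obstacle --- the disjointness bookkeeping in your step (4) --- is exactly the point the paper itself passes over lightly, asserting only that the middle, rows, and columns are ``preserved under subtraction'' and that cross-interactions can only add further points; so your write-up is, if anything, slightly more candid about where the care is needed.
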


\section{Other $2$-Dimensional Constructions}

The square lattice construction that was used in the previous two sections can also be generalized to polygons that have integer vertices and are locally point symmetric. From the work of \cite{DKMMWW}, we know that the shape of locally point symmetric polygons is preserved under subtraction. The only shapes that satisfy these two conditions are parallelograms. Thus, we  generalize our construction to  rectangles, then to all parallelograms with integer vertices. We first qualitatively describe the sets, then explicitly write their formulas. Figure \ref{fig:parallelogram MSTD} demonstrates the shape of the sets. The fringe sets and middle sets for these sets mirror the construction of the square lattice case. The fringe sets are created by placing the $1$-dimensional fringes along the edges by each vertex and then filling in the rest of the corner. In the rectangle case, there are almost completely filled-in squares in each of the corners, and in the parallelogram case, the squares are appropriately sheared. We place a slightly larger fringe set in the right corners than the left corners. The middle set is created with a filled-in wide cross in the middle of the set, and sheared appropriately in the parallelogram.  \\

For a rectangle construction, we have side lengths $ n_1, n_2 \in \mathbb{N}$  where $n_1, n_2>4(2k^2+1).$ $B_{1,1}$, $B_{1,2}$, $B_{2,1}$, and $B_{2,2}$ are identically constructed as in the square lattice case. 
 In the case where $s_1> s_2,$
\begin{align}
M \ = \ \{(x,y) : 2k^2+1-d_1 \leq x \leq n_1-(2k^2+1-d_1), 0 \leq y \leq n_2\} \ \cup \nonumber \\
\{(x,y) : 2k^2+1-d_1 \leq y \leq n_2-(2k^2+1-d_1), 0 \leq x \leq n_1\}.\end{align}

 In the case where $s_2>s_1,$
\begin{align}
M \ =& \ \{(x,y) : 2k^2+1-s_1 \leq x \leq n_1-(2k^2+1-d_1), 0 \leq y \leq n_2\} \ \cup \nonumber \\ 
&\{(x,y) : 2k^2+1-s_1 \leq y \leq n_2-(2k^2+1-d_1), 0 \leq x \leq n_1\}.\end{align}

In both cases we then construct our rectangle, $A,$ as follows:
\begin{equation}A \ = \ B_{1,1} \ \cup \ [(n_1,n_2)-B_{2,1}] \ \cup \ [(2k+1, n_2)-B_{1,2}] \ \cup \ [(n_1, 2k+2)-B_{2,2}].\end{equation}

For the parallelogram case, let $n_1, n_2, n_3 \in \mathbb{N}.$ We let  $n_1$ represent the length of the bottom and top edges, $n_2$ the height, and $n_3$  the length from the bottom left vertex to where a perpendicular line dropped from top left vertex reaches the bottom. We need to make $n_1, n_2, n_3 $ large enough so that both sides have more than  $4(2k^2+1)$ integer points along them, so we pass to a parallelogram scaled  by $4(2k^2+1)$ and relabel. We note the slope of the diagonal edge is $n_2/n_3=m.$

\begin{figure}
    \centering
    \includegraphics[scale=.8]{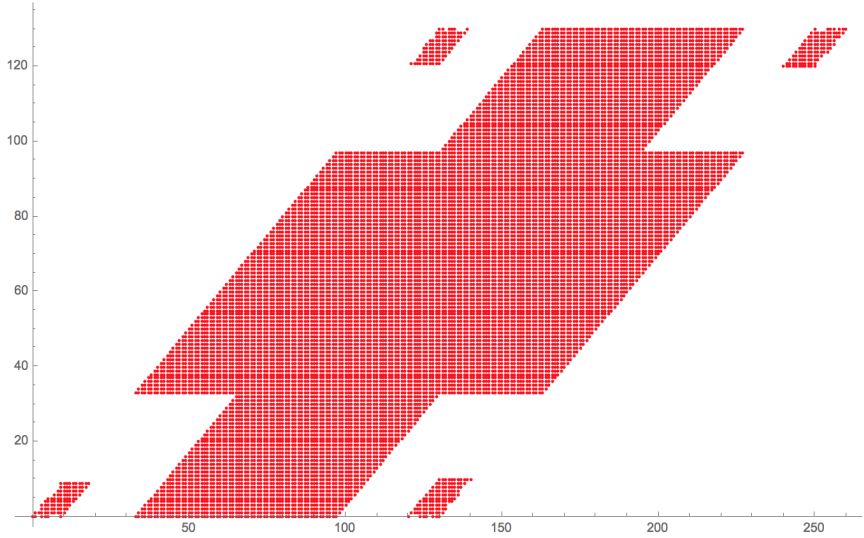}
    \caption{The generalized MSTD set for $k=4$, $n=130,$ $s_1=4$, $d_1=0$, $s_2=2$, and $d_2=2$ that has been sheared with slope $m=1$.}
    \label{fig:parallelogram MSTD}
\end{figure}

To create this set, we can use a mapping to shear the square and rectangle lattice sets that we already created. 

We define a map   $\varphi: \mathbb{Z}^2 \rightarrow \mathbb{Z}^2 $ by $\varphi(x,y)= (x+my, y).$ We know by Theorem \ref{linear transformation} that if $A$ is a set such that $|s_1A-d_1A|>|s_2A-d_1A|,$ $\varphi(A)$ has the same property. 
We can also explicitly describe the generalized MSTD sets that are subsets of parallelograms.  
We take the same $B_{1,1}, B_{1,2},B_{2,1}, B_{2,2}$ as before, but then we shear them:

$$B_{1,1} \ = \ \{(x+my,0),(0,y) : 0 \leq x, y \leq 2k+1 \}  \ \setminus$$ \begin{equation}[\{(2,0),(2m,2)\} \cup \{ (x,0), (my,y) : k+2\leq x, y \leq 2k \}],\end{equation}

$$B_{2,1} \ = \ \{(x+my,0),(0,y)  : 0 \leq x, y \leq 2k+2 \}  \ \setminus$$
\begin{equation}[\{(3,0), (3m,3)\} \cup\{ (x,0), (my,y) : k+3\leq x, y \leq 2k+1 \}],\end{equation}

$$B_{1,2} \ = \ \{(2k+1-(x+my),0),(2k+1,y)  : 0 \leq x, y \leq 2k+1 \} \ \setminus$$
$$[\{(2k+1-2,2k+1),(2k+1-2m,2)\} \ \cup$$ 
\begin{equation}  \{ (2k+1-x,2k+1),(2k+1-my,y) : k+2\leq x,y \leq 2k \}],\end{equation}

$$B_{2,2} \ = \ \{(x+my,2k+2),(0,2k+2-y)  : 0 \leq x, y \leq 2k+2 \}\ \setminus$$
$$[\{(3,2k+2),( 3m,2k+2-3)\} \ \cup$$ \begin{equation}\{ (x,2k+2),(my,2k+2-y) : k+3\leq x,y \leq 2k+1 \}].\end{equation}

We then similarly shear the middle set. In the case where $s_1>s_2$,
$$M \ = \ \{(x+my,y) : 2k^2+1-s_1 \leq x \leq n-(2k^2+1-d_1), 0 \leq y \leq n_2\} \ \cup$$ \begin{equation} \{(x+my,y) : 2k^2+1-s_1 \leq y \leq n-(2k^2+1-d_1), 0 \leq x \leq n_1\}.\end{equation}

For $s_2>s_1$.
$$M \ = \ \{(x+my,y) : 2k^2+1-s_1 \leq x \leq n-(2k^2+1-s_1), 0 \leq y \leq n_2\} \ \cup$$ \begin{equation}\{(x+my,y) : 2k^2+1-s_1 \leq y \leq n-(2k^2+1-s_1), 0 \leq x \leq n_1\}.\end{equation}

Finally, we define our complete set:
\begin{equation}A \ = \ B_{1,1}  \cup  [(n_1 +mm_2,n_2)-B_{2,1}]   \cup  [(2k+1 +mn_2, n_2)-B_{1,2}]  \cup  [(n_1 +m(2k+2), 2k+2)-B_{2,2}].\end{equation}

\section{$D$-Dimensional Constructions}
We now extend our rectangular construction of generalized MSTD sets to $d$-dimensions. These are $d$-dimensional rectangles with non-negative coordinates and one corner at $(0,0, \ldots, 0)$.  We first create our fringes, which are $d$-dimensional cubes with the $1$-dimensional fringes placed along their edges attached to the vertex, as seen in Figure \ref{tikzfringes}.

\begin{figure}
    \centering
    \includegraphics[scale=.8]{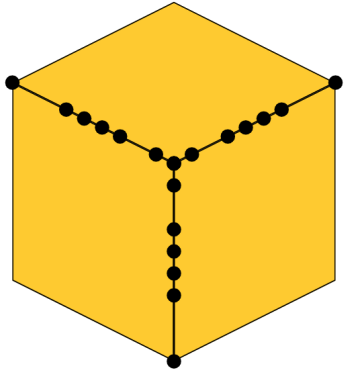}
 %\tdplotsetmaincoords{60}{45}
    % \begin{tikzpicture}[tdplot_main_coords]
    %     \begin{scope}[canvas is xz plane at y=0]
    %     	\clip (0, 0) rectangle (3, 3);
    %     	\fill[umichmaize] (0, 0) rectangle (3, 3);
    %     	\draw[thick] (0, 0) rectangle (3, 3);
    %     \end{scope}
        
    %     \begin{scope}[canvas is xy plane at z=3]
    %     	\clip (0, 0) rectangle (3, 3);
    %     	\fill[umichmaize] (0, 0) rectangle (3, 3);
    %     	\draw[thick] (0, 0) rectangle (3, 3);
    %     \end{scope}
        
    %     \begin{scope}[canvas is yz plane at x=3]
    %     	\clip (0, 0) rectangle (3, 3);
    %     	\fill[umichmaize] (0, 0) rectangle (3, 3);
    %     	\draw[thick] (0, 0) rectangle (3, 3);
    %     \end{scope}
        
    %     \foreach \val in {0, 1, 4/3, 5/3, 2, 8/3, 3} {
    %     	\draw[mark=*, mark size=2.5, only marks] plot coordinates {
    %     		(3, 0, \val)
    %     		(3, 3 - \val, 3)
    %     		(\val, 0, 3)
    %     	};
    %     }
        
    % \end{tikzpicture}
    \caption{One of the fringe sets in $3$-dimensions. The black dots represent the included points on the edges. The rest of the cube is filled-in.}
    \label{tikzfringes}
\end{figure}

$$B_1 \ = \ \{(x_1,0, \ldots, 0), \ldots, (0,0, \ldots, x_d): 0 \leq x_1, \ldots, x_d \leq 2k+1\}\ \setminus$$
$$[\{(2,0, \ldots, 0), (0,2, \ldots, 0),  \ldots, (0,0, \ldots, 2)\} \ \cup$$
\begin{equation*} \{ (x_1, 0, \ldots, 0), (0, x_2, \ldots, 0), \ldots,  (0, 0, \ldots, x_d) : k+2 \leq x_1, x_2, \ldots x_d \leq 2k\}].  \end{equation*}

$$B_2 \ = \ \{(x_1,0, \ldots, 0), \ldots, (0,0, \ldots, x_d): 0 \leq x_1, \ldots, x_d \leq 2k+2\}\ \setminus$$
$$[ \{(3,0, \ldots, 0), (0,3, \ldots, 0),  \ldots, (0,0, \ldots, 3)\} \  \cup$$
\begin{equation} \{ (x_1, 0, \ldots, 0), (0, x_2, \ldots, 0), \ldots,  (0, 0, \ldots, x_d) : k+3 \leq x_1, x_2, \ldots x_d \leq 2k+1\}].  \end{equation}

However, to actually put these fringes into corners of our set, we need to change their orientation. We introduce new notation. Let the side lengths be $n_1, n_2, \ldots, n_d,$ where the side of length $n_j$ lies parallel to the $j$-th axis. We denote the fringes as $B_{i_1,i_2, \dots, i_d},$ where $i_j\in\{0,1\}$. If $i_j=0$,  then the $j$th coordinate of the corner $B_{i_1,i_2, \dots, i_d}$ is in is $n_j$.  If $i_j=1$,  then the $j$th coordinate of the corner $B_{i_1,i_2, \dots, i_d}$ lies in is $0$. We then define maps to create each fringe with proper orientation. The map $\varphi_{i_1,i_2, \ldots, i_d}$ corresponds to $B_{i_1,i_2, \dots, i_d}$. Let $\varphi_{1,i_2, \ldots, i_d}: \mathbb{Z}^d \rightarrow \mathbb{Z}^d$ be defined by
\begin{equation}\varphi_{1,i_2, \ldots, i_d} (x_1, x_2, \ldots, x_d) \ = \ (x_1, (2k+1)i_2-x_2, \ldots,(2k+1)i_d-x_d),\end{equation}

and let $\varphi_{0,i_2, \ldots, i_d}: \mathbb{Z}^d \rightarrow \mathbb{Z}^d$ be defined by
\begin{equation}\varphi_{0,i_2, \ldots, i_d} (x_1, x_2, \ldots, x_d)\ = \ (2k+2-x_1, (2k+2)i_2-x_2, \ldots,(2k+2)i_d-x_d). \end{equation}

Then we have $\varphi_{0,i_2, \ldots, i_d}(B_1)=B_{0,i_2, \dots, i_d}$ and $\varphi_{1,i_2, \ldots, i_d}(B_2)=B_{1,i_2, \dots, i_d}.$\\

%$$B_{1,i_2, \dots, i_d}=\{(x_1,(2k+1)i_2, \ldots, (2k+1)i_d), \ldots, ((2k+1),(2k+1)i_2, \ldots, x_d): 0 \leq x_1, \ldots, x_d \leq 2k+1\} \ \setminus$$
%$$\{(2 , (2k+1)i_2, \ldots, (2k+1)i_d), ((2k+1),2, \ldots, (2k+1)i_d),  \ldots, ((2k+1),(2k+1)i_2, \ldots, 2)\} \cup$$
%$$ \{ (x_1, (2k+1)i_2, \ldots, (2k+1)i_d), ((2k+1), x_2, \ldots, (2k+1)i_d), \ldots,  ((2k+1), (2k+1)i_2, \ldots, x_d) :$$
%$$k+2 \leq x_1, x_2, \ldots x_d \leq 2k\} $$

%$$B_{0,i_2, \dots, i_d}=\{(x_1,(2k+2)i_2, \ldots, (2k+2)i_d), \ldots, (0,(2k+2)i_2, \ldots, x_d): 0 \leq x_1, \ldots, x_d \leq 2k+1\} \ \setminus$$
%$$\{(2 , (2k+2)i_2, \ldots, (2k+2)i_d), (0,2, \ldots, (2k+2)i_d),  \ldots, (0,(2k+2)i_2, \ldots, 2)\} \cup$$
%$$ \{ (x_1, (2k+2)i_2, \ldots, (2k+2)i_d), (0, x_2, \ldots, (2k+2)i_d), \ldots,  (0, (2k+2)i_2, \ldots, x_d) :$$
%$$k+2 \leq x_1, x_2, \ldots x_d \leq 2k\} $$

We then define our middle section of the set. It is a $d$-dimensional rectangle with a $d$-dimensional cube missing  from each corner, as seen in Figure \ref{tikzmiddle}.

\begin{figure} 
    \centering
    \includegraphics[scale=.8]{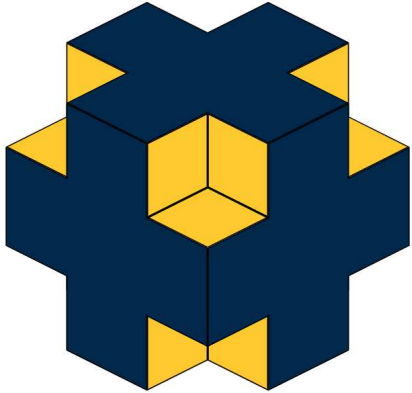}
    \caption{The shape of the middle set in $3$-dimensions.}
    \label{tikzmiddle}
\end{figure}

We define for $s_1>s_2$,
\begin{equation}M \ = \ \bigcup_{i=1}^d \{(x_1, x_2, \ldots, x_d): 2k^2+1-d_1 \leq x_j \leq n_j- (2k^2+1-d_1) \text{ for } j \neq i,  0 \leq x_i \leq n_i\}.\end{equation}

%$$M \ = \ \{(x_1, x_2, \ldots, x_d): 2k^2+1-d_1 \leq x_1, x_3, \ldots x_d \leq n- (2k^2+1-d_1), 0 \leq x_2 \leq n_2\} \cup $$
%$$\{(x_1, x_2, \ldots, x_d): 2k^2+1-d_1 \leq x_1, x_2, x_4, \ldots x_d \leq n- (2k^2+1-d_1), 0 \leq x_3 \leq n_3\} \cup \ldots$$
%$$\cup \{(x_1, x_2, \ldots, x_{d-1}): 2k^2+1-d_1 \leq x_1, x_2, \ldots x_{d-1} \leq n- (2k^2+1-d_1), 0 \leq x_d \leq n_d\}$$.

For $s_2>s_1$ we have,
\begin{equation} M \ = \ \bigcup_{i=1}^d \{(x_1, x_2, \ldots, x_d): 2k^2+1-s_1 \leq x_j \leq n_j- (2k^2+1-s_1) \text{ for } j \neq i,  0 \leq x_i \leq n_i\}.\end{equation}

%$$M \ = \ \{(x_1, x_2, \ldots, x_d): 2k^2+1-s_1 \leq x_1, x_3, \ldots x_d \leq n- (2k^2+1-s_1), 0 \leq x_2 \leq n_2\} \cup $$
%$$\{(x_1, x_2, \ldots, x_d): 2k^2+1-s_1 \leq x_1, x_2, x_4, \ldots x_d \leq n- (2k^2+1-s_1), 0 \leq x_3 \leq n_3\} \cup \ldots$$
%$$\cup \{(x_1, x_2, \ldots, x_{d-1}): 2k^2+1-s_1 \leq x_1, x_2, \ldots x_{d-1} \leq n- (2k^2+1-s_1), 0 \leq x_d \leq n_d\}$$.

\begin{equation} A \ = \ M \ \cup \end{equation}
$$\left[\bigcup_{i_2, \ldots, i_d \in \{0,1\}, i_1=0}((n_1, (1-i_2)n_2+i_2(2k+2), \ldots, (1-i_d)n_d+i_d(2k+2))-B_{0,i_2, \dots, i_d})\right] \ \cup$$
$$\left[\bigcup_{i_2, \ldots, i_d \in \{0,1\}, i_1=1}((2k+1, (1-i_2)n_2+i_2(2k+1), \ldots, (1-i_d)n_d+i_d(2k+1))-B_{1,i_2, \dots, i_d})\right].$$

We then can modify this construction to be a subset of a $d$-dimensional parallelogram by shearing. In the 2-dimensional case, only the edges parallel to the $y$-axis could be sheared in the $x$ direction. For the $3$-dimensional case, we are still able to shear the base, as in the $2$-dimensional case, but we can now also shear the edges parallel to the $z$-axis in the $x$ and the $y$ directions. Thus there are $3$ ways to shear the fringe. More generally, there are $d(d-1)/2$ positive directions that each of the edges can be sheared in, or in other words, there are $d(d-1)/2$ ways to shear the set. We therefore have $d(d-1)/2$ slopes $m_{1,2}, m_{1,2}, \ldots, m_{1,d}, m_{2,3}, \ldots, m_{2,d}, \ldots, m_{d-1,d}$ that define each way to shear the fringe. The subscripts refer to the coordinates involved in the shear, for example $m_{i,j}$ is the  $j$th axis sheared in the $i$th direction.

We then create a map that shears our rectangular constructions. We define $\psi: \mathbb{Z}^d \rightarrow \mathbb{Z}^d$ by $$\psi(x_1, x_2, \ldots, x_d) =$$ \begin{equation}(x_1 +m_{1,2}x_2 + m_{1,3}x_3 + \cdots + m_{1,d}x_d, x_2 +m_{2,3}x_3  + \cdots + m_{2,d}x_d, \cdots, x_d). \end{equation}

\section{Further Remarks}
We believe that there are several aspects of this paper that can be extended further in the future. First, work on $1$-dimensions in \cite{ILMZ} has shown positive percentages for generalized MSTD sets, chains of generalized MSTD sets, and $k$-generational sets using the methods of \cite{MO}. \cite{DKMMWW} extended the probabilistic methods of \cite{MO} into $d$-dimensions to find a positive percentage of $d$-dimensional MSTD sets. We believe that the same methods could be used to find positive percentages of generalized MSTD sets, chains of MSTD sets, and $k$-generational sets in $d$-dimensions.  

Second, we believe that the work on proving that for a set $A$ with specific properties, $|kA+kA|>|kA-kA|$ does not hold for all $k \in \mathbb{N}$ can be extended to sets with less restrictions. Consider slightly different conditions from those in Lemma \ref{growth of kA}:
Let $A=\{(a_1, b_1), (a_2, b_2), \ldots, (a_m, b_m)\}$ where $a_i, b_i$ are non-negative integers. Let $a$ be the smallest non-zero $a_i$, $a'$ be the largest $a_i$, $b$ be the smallest non-zero $b_i$ and $b'$ be the largest $b_i$. Suppose $a$ and $a'$ are coprime,  $b$ and $b'$ are coprime, and $\{(0,0), (a,0), (0,b),$ $(a',0), (0,b'), (a',b') \} \subset A.$ For this set of conditions where fewer elements are required to be in $A$, the growth of $|kA|$ is still linear, but the set is more complicated. Instead of having rows and columns missing in the fringes, the right and top parts of the fringes have a repeated jagged pattern. The width of the right fringe and the height of the top fringe remain constant as $k$ grows. Thus for large $k$, this set also has linear growth for $|kA|.$ Therefore, we believe it would be possible to come up with a similar lemma to Lemma \ref{growth of kA} with fewer conditions. In fact, we think that it would be possible to prove that for sufficiently large $k$, for any 2-dimensional set, the amount of the points missing from $kA$ has linear growth. A lemma about linear growth for any $2$-dimensional set $A$ would allow a general theorem that $|kA+kA|\leq |kA-kA|$ for large  $k$. We additionally hope that we can generalize a statement about the growth of $|kA|$ into $d$-dimensions.

\appendix

\section{Corrections to \textquotedblleft Generalized More Sums than Differences"}

In this paper, many of our proof methods followed directly from those in \cite{ILMZ}.
However, in closely going through these authors' work, we found some mistakes which must be corrected, especially as our work builds directly on theirs. 

\begin{itemize}
    \item In Case $1$ in the proof of Theorem $2.2$, $M$ should be defined as  $M=[kr-2k+1-d_1, n-(kr-2k+1-d_1)]$ instead of $M=[kr-2k+1-d_1, kn-(kr-2k+1-d_1)]$.

    \item The proof of Theorem $2.2$ should be split up into the case where $s_1>d_1$ and the case where $s_1=d_1$. The $s_1>d_1$ case gives the original result that $|s_1A-d_1A|=|s_2A-d_2A|+1$, but the $s_1=d_1$ case gives $|s_1A-d_1A|=|s_2A-d_2A|+2.$ 
    
    \item In the proof for Lemma $3.1$, the authors state \textquotedblleft$[2m - 2u, 2m - 2 - 1] \subset U + U$". This should be \textquotedblleft$[2m-2u, 2m - 2 - 2] \subset U + U$."
    
    \item Based on its proof, Theorem $1.4$ only holds for $s_1, d_1\geq 1$, instead of $s_1, d_1\geq 0$ as claimed.
    
    \item In Lemma $4.1$,  equation $(4.3)$ should have $(n-(kr-2k+1-d)-R)$ instead of $(n-(2kr-2k+1-d)-R)$ 
    
    \item For Lemma $5.1$, the authors claim that for $k \geq a_m,$ the middle of the set is filled-in. However this does not always hold. For example for the set $A=\{0,5,8\},$ we have $a_m=8$. Thus for $k=a_m$, $54$ should be in $kA$. However the only way to get $54$ with sums of the elements in $A$ is $6(5)+3(8)=54.$ This would require $k=9$.
    
    \item For Lemma $5.1$, the right fringe $R$ is defined incorrectly. It should be $R=kA \cap [(k-a_1)a_m, ka_m]$, instead of $R=kA \cap [(k-1)a_m, ka_m]$. Then this fringe is symmetric to the left fringe, which the authors' proof needs to be true. 
    
    \item For Lemma $5.1$, a lemma needed to show that there are no $k$-generational sets for all $k$, the authors define $A=\{0, a_1, a_2, \ldots, a_m\} \subset [0, n-1]$ to be a set of integers where $a_1 < a_2 < \cdots < a_m$ and assume $\gcd(a_1, \ldots, a_m)=1$. They then define $B=\{a'_2, \ldots, a'_m\}$ where $a_i=a_i \mod a_1$ and claim that $\gcd(a_2', \ldots, a'_m)=1.$ However we can find a counterexample to the statement $\gcd(a_2', \ldots, a'_m)=1.$    Let $a_1 \in \mathbb{N}$ and $1<x < a_1$ such that $a_1$ and $x$ are coprime. Suppose $A=\{0, a_1, a_2, \ldots, a_m\}$ where $a_1< a_2< \ldots < a_m$  and $a_i=c_{1,i}a_1 +c_{2,i}x$ where $1 \leq c_{1,i}$, $0 \leq c_{2,i}x \leq a_1$. Then for
    if  $B=\{a'_2, \ldots, a'_m\}$ as defined above, we have $\gcd(a_2', \ldots, a'_m)=x$ or $=0.$ To fix this claim, we prove in the following theorem that for $A$, $|A-A| \geq |A+A|$ and therefore if we are studying $k$-generational sets, it doesn't even make sense to consider these sets.
\end{itemize}

\begin{theorem}\label{correction}
Let $a_1 \in \mathbb{N}$ and $1<x < a_1$ such that $a_1$ and $x$ are coprime. Suppose $A=\{0, a_1, a_2, \ldots, a_m\}$ is a set of integers where $a_1< a_2< \ldots < a_m$  and $a_i=c_{1,i}a_1 +c_{2,i}x$ where $1 \leq c_{1,i}$, $0 \leq c_{2,i}x \leq a_1$.  Then $|A-A| \geq |A+A|$.
\end{theorem}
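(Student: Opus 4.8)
The plan is to exhibit an explicit injection from $A+A$ into $A-A$, exploiting the fact that every element of $A$ lives in the same narrow residue-behavior window determined by $a_1$ and $x$. First I would record the structural observation driving everything: since $a_i = c_{1,i}a_1 + c_{2,i}x$ with $0 \le c_{2,i}x \le a_1$ and $1 \le c_{1,i}$, every nonzero element of $A$ is "one copy of $a_1$ plus a small coprime offset," so $A$ behaves, modulo $a_1$, like a scaled copy of the set $\{0, a_1, x, 2x, \dots\}$-type configuration; in particular the extreme elements $0$ and $a_m$ control the fringes of $A+A$ exactly as in the $1$-dimensional MSTD philosophy summarized in the introduction. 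The key point is that $A+A$ and $A-A$ both naturally split into a "middle" part, on which sums and differences are forced to coincide in count because of the arithmetic-progression-like structure, and a "fringe" part near $0$, near $a_m$, and near $2a_m$ (for $A+A$) versus near $\pm a_m$ (for $A-A$), where the asymmetry must be accounted for directly.

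The steps, in order, would be: (1) normalize so $\gcd$ considerations are clean — note $a_1$ and $x$ coprime forces the "offsets" $c_{2,i}x \bmod a_1$ to be distinct representatives, which pins down how many elements $A$ can have and where they sit; (2) classify $A+A$: write each sum $a_i + a_j = (c_{1,i}+c_{1,j})a_1 + (c_{2,i}+c_{2,j})x$ and observe that because the $x$-parts are bounded by $a_1$, carrying is controlled, so $A+A$ is a union of translated blocks indexed by $c_{1,i}+c_{1,j}$; (3) do the same for $A-A$, where $a_i - a_j = (c_{1,i}-c_{1,j})a_1 + (c_{2,i}-c_{2,j})x$, and note the $c_{1,i}-c_{1,j}$ index ranges symmetrically; (4) build the injection block by block — the crucial extra room comes from the fact that $A-A$ contains both the "positive side" and its negative reflection, giving it at least two fringe copies of the offset-set $\{c_{2,i}x : i\}$-type pattern against $A+A$'s single doubled fringe, which is exactly the elementary inequality "commutative sums give fewer fringe configurations than non-commutative differences" made rigorous by the near-AP structure; (5) verify the middle blocks match in cardinality (here one wants that on the interior both $A+A$ and $A-A$ are fully saturated intervals in the relevant residue classes), and conclude $|A-A| \ge |A+A|$ by summing the block-wise comparisons.

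I expect the main obstacle to be step (4)–(5): precisely quantifying the fringe contributions and showing the middle genuinely contributes equally on both sides. The danger is that the $x$-offsets, while bounded by $a_1$, need not themselves form an interval or even an AP, so the blocks of $A+A$ and $A-A$ may be irregular sets rather than intervals, and one must argue the injection without assuming saturation. The cleanest route around this is probably to map $A+A \to A-A$ via $a_i + a_j \mapsto a_i - a_j$ on a well-chosen "canonical half" of the pairs and then check injectivity using the uniqueness of the $(c_1, c_2)$ decomposition — here the coprimality of $a_1, x$ and the bound $c_{2,i}x \le a_1$ are exactly what guarantee that distinct pairs yield distinct differences, so the map is forced to be injective, and the leftover differences (the genuinely negative ones, or those from pairs not in the canonical half) witness the slack giving $\ge$ rather than $=$. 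If full injectivity on all of $A+A$ is too delicate, a fallback is to prove the inequality only for the fringe portions and invoke a separate short argument that the two middles have equal size, which suffices since the sets decompose as disjoint unions of fringe and middle.
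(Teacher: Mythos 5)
Your central mechanism does not work. The map $a_i + a_j \mapsto a_i - a_j$ (restricted to a canonical half, say $a_i \ge a_j$) lands only in the non-negative part of $A-A$, and that part is in general strictly smaller than $A+A$, so no such injection can exist. A concrete instance satisfying the theorem's hypotheses: take $a_1 = 3$, $x = 2$ and $A = \{0,3,6,9,12\}$ (each $a_i = c_{1,i}\cdot 3 + 0\cdot 2$). Then $|A+A| = 9$, while the non-negative differences are only $\{0,3,6,9,12\}$, five elements; the pairs $(3,0)$ and $(9,6)$ have distinct sums but the same difference. So the coprimality of $a_1$ and $x$ together with the bound $c_{2,i}x \le a_1$ do \emph{not} force distinct pairs to yield distinct differences, and the negative differences are needed just to reach equality, not as extra ``slack.'' The map is also not well-defined on $A+A$ in the first place: a given sum generally has several representations whose associated differences disagree (e.g.\ $12 = 0+12 = 3+9 = 6+6$ yields candidate differences $12$, $6$, $0$). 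Your fallback --- a fringe/middle decomposition with a saturated middle --- is not available either, because the hypotheses allow $A$ to be an arbitrary sparse subset of the permitted congruence classes, so neither $A+A$ nor $A-A$ need contain a filled interval or saturated residue classes.

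The paper's proof takes a genuinely different, perturbative route: it first verifies the inequality by direct counting when $A = \{0, a_1\} \cup B$ with $B$ a union of segments of arithmetic progressions of common difference $a_1$ (obtaining $|A+A| = n+m+3$ versus $|A-A| = n+2m$, where $n = |B+B| = |B-B|$, so the inequality holds once $m>2$), and then tracks how both cardinalities change under the modifications needed to reach a general set of the stated form: inserting or lengthening a ``skip'' in a progression (which adds $2$ to $|A+A|$ but $4$ to $|A-A|$, or $1$ versus $2$), and adjoining elements divisible by $a_1$ (which never shrinks the deficit). If you want to salvage an injection-style argument, you would have to define it on representations rather than on sums and then control multiplicities --- which is exactly the difficulty that makes MSTD sets possible at all --- so some structural input of the kind the paper exploits is unavoidable.
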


\begin{proof}
To prove this theorem, we first show that $|A-A| \geq |A+A|$ is true in two specific cases. We then show that every perturbation that can create a set with the properties stated in the theorem  still gives us $|A-A| \geq |A+A|$.

If every element greater than $a_1$ is of the form $a_i=c_{1,i}a_1$, we then have an arithmetic progression, which is balanced. 

We then examine the case where every element in $A$ that is not $a_1$ or $0$ is of the form  $a_i=c_{1,i}a_1 +c_{2,i}x$ where $c_{2,i} \neq 0.$ We start with the sub-case where the elements in $A$ that are not $a_1$ or $0$ come from a the intersection of intervals  with arithmetic progressions $A_{\ell}$ of the form $a_i=c_ia_1 +\ell x$ where $1< \ell x<a_i$. In other words, each $A_{\ell}$ is a section of an arithmetic progression that is not missing any points with common difference $a_1$ with each element equal to $\ell x \mod a_1$. Let $B=\bigcup A_{\ell}$ and let $A=\{0, a_1\} \ \cup \ B$. We note that the union of arithmetic progressions is sum-difference balanced, thus we can let  $n=|B-B|=|B+B|.$

We then have $A+A=(B+B) \ \cup \ (0+B) \ \cup \ (0+0)  \ \cup \ (0 + a_1) \ \cup \ (a_1 +a_1) \ \cup \ (a_1 + a_m) $. This can be verified to be a disjoint union as $a_1$ does not divide $a_m$, therefore $|A+A|=n + (m -1) + 1 + 1 +1 +1= n+m+3 $.

Additionally, $A-A=(B-B) \ \cup \ (0-B)  \ \cup \ (B-0) \ \cup \  (0-a_m ) \ \cup \ (a_m-0).$ This too can be verified to be a disjoint union, therefore $|A-A|=n + 2(m -1) + 1 + 1 = n+2m$.

When $m>2$, we have $|A+A|\leq |A-A|.$ We note $m>2$ is a reasonable assumption to make as Hegarty \cite{He} proved that the smallest MSTD set has $8$ elements.

We then see what happens when we perturb the sections of arithmetic progressions that comprise $A$. Suppose the length of an arithmetic progression remains the same, but a \textquotedblleft skip" is introduced into it. For example, if $\{5,8,11, 14\}$ becomes $\{5,11, 14, 17\}$. 

It can be verified that $|A+A|$ grows by 2, and $|A-A|$ grows by 4.
If we add to the length of the skip (still while keeping the number of elements the same), we then have $|A+A|$ grows by $1$ and and $|A-A|$ grows by 2. Thus for $A_{\ell}$ which are subsets of  arithmetic progressions with common difference $a_1$ with each element equal to $\ell x \mod a_1$   and $A=\{0, a_1\} \bigcup A_{\ell}$, we have $|A-A| \geq |A+A|$.

We then see what happens when we introduce points that are equal to $0 \mod{a_1}$ to our set or, in other words, add points of the form $a_i=c_{1,i}a_1.$ We claim that introducing a point of this form either increases the deficit between $|A+A|$ and  $|A-A|$  or keeps it the same. Let $A'$ be $A  \ \cup \ a_n$ where $a_n = 0 \mod{a_1}.$ We then have $A'+A'=(A+A) \ \cup \ (A+a_n)  \ \cup \  (a_n+a_n)$ and $A'-A'=(A-A) \ \cup \ (A-a_n) \ \cup  \ (a_n-A)$. In the extremal case, $(A-a_n)$ and $(a_n-A)$ differ by one point, $an-a_1$  and $a_1-a_n$ respectively, thus $|A'+A'|\leq |A'-A'|.$  One can see that adding in more points that are equal to $0 \mod{a_1}$ has a similar effect. 

Through these perturbations, any set that satisfies the requirements of the theorem can be created. Therefore any set that satisfies the characteristics outlined in the theorem cannot be MSTD.
\end{proof}

%\textbf{Some explicit constructions of sets with more sums than differences}
%\begin{itemize}
%\item On page 2 Hegarty defines a normalized MSTD set. However, under his definition there are two sets, instead of one unique set, that are the normalized MSTD set. He failed to take into account "flipping" the set. For example he claims that $(0,2,3,4,7,11,12,14)$  is the only normalized MSTD set of size 8, however under his definition of normalized the "flip" of this set, $(0,2,3,7,10,11,12,14)$, is also a normalized MSTD set of size 8.
%\end{itemize}

\end{document}